\newtheorem{theorem}{Theorem}[section] % \newtheorem一定要放在hyperref后边，否则可能造成\autoref及\cref的指向位置不正确
\newtheorem{lemma}[theorem]{Lemma}
\newtheorem{conjecture}{Conjecture}
\newtheorem{case}{Case}
\newtheorem{claim}{Claim}
\theoremstyle{definition}
\newtheorem{proposition}{Proposition}
\def\th@plain{%
  \upshape %\itshape % Use upright font in the body of the theorem
}
\newcommand{\ie}{i.e.,\ }
\renewenvironment{proof}[1][\proofname]{\par
  \pushQED{\qed}%
  \normalfont \topsep6\p@\@plus6\p@\relax
  \trivlist
  \item[\hskip\labelsep
        \bfseries
    #1\@addpunct{.}]\ignorespaces
}{%
  \popQED\endtrivlist\@endpefalse
}
\crefname{claim}{Claim}{Claims}
\newcommand{\solidnode}[1]{\node[circle, inner sep = 1, fill = black, draw] () at (#1) {}}
\begin{document}
\title{Claw-free cubic graphs are (1, 1, 1, 3)-packing edge-colorable}
\author{Jingxi Hou\footnote{School of Mathematics and Statistics, Henan University, Kaifeng, 475004, P. R. China.} \and Tao Wang\footnote{Center for Applied Mathematics, Henan University, Kaifeng, 475004, P. R. China. \texttt{Corresponding author: wangtao@henu.edu.cn; https://orcid.org/0000-0001-9732-1617}. Tao Wang was supported by the Natural Science Foundation of Henan Province (No. 242300420238).} \and Xiaojing Yang\footnote{School of Mathematics and Statistics, Henan University, Kaifeng, 475004, P. R. China. Xiaojing Yang was supported by National Natural Science Foundation of China (No. 12101187).}}
\date{July 19, 2025}
\maketitle

\begin{abstract}
For a non-decreasing positive integer sequence $S = (s_{1}, \dots, s_{k})$, an $S$-packing edge-coloring of a graph $G$ is a partition of the edge set of $G$ into subsets $E_{1}, \dots, E_{k}$ such that for each $1 \leq i \leq k$, the distance between any two distinct edges $e_{1}, e_{2} \in E_{i}$ is at least $s_{i} + 1$. Gastineau and Togni conjectured that cubic graphs, except the Petersen and Tietze graphs, admit $(1, 1, 1, 3)$-packing edge-colorings. In this paper, we prove that every claw-free cubic graph admits such a coloring.

\textbf{Keywords:} Packing edge-coloring; Cubic graphs; Claw-free graphs
\end{abstract}

\section{Introduction}
A \emph{proper edge-coloring} of a graph $G$ is an assignment of colors to its edges such that no two adjacent edges receive the same color. This concept can be extended to \emph{$d$-distance edge-coloring}, where the \emph{edge-distance} in $G$, defined as the vertex-distance in its line graph, must be at least $d + 1$ for edges sharing the same color. 

A \emph{$d$-packing edge-coloring} of $G$ partitions $E(G)$ into color classes such that the distance between any two edges within the same class is at least $d + 1$. Generalizing this notion, given a non-decreasing sequence $S = (s_{1}, \dots, s_{k})$, an \emph{$S$-packing edge-coloring} assigns colors to edges such that any two edges sharing color $i$ have a distance greater than $s_{i}$. A graph admitting such a coloring is called \emph{$S$-packing edge-colorable}. Introduced by Gastineau and Togni \cite{MR3944589} as an extension of $S$-packing coloring (proposed by Goddard and Xu \cite{MR3163180, MR2993523}), this concept has inspired significant research (see \cite{MR4878746, MR3600792, MR4145729, MR4273008, MR3508758}). 

In an $S$-packing edge-coloring, each subset $E_{i}$ is referred to as an \emph{$s_{i}$-packing}. Notably, a $1$-packing corresponds to a matching, while a $2$-packing forms an induced matching. For brevity, exponents denote repeated elements in $S$; e.g., $(1, 2^{4})$ represents $(1, 2, 2, 2, 2)$. Hocquard, Lajou and Lu\v{z}ar \cite{MR4520054} proved that all subcubic graphs admit $(1^{2}, 2^{5})$- and $(1, 2^{8})$-packing edge-colorings. Liu, Santana and Short \cite{MR4660817} confirmed that all subcubic multigraphs are $(1, 2^{7})$-packing edge-colorable. Gastineau and Togni \cite{MR3944589} showed that cubic graphs with a $2$-factor are $(1^{3},3^{2})$-, $(1^{3}, 4^{5})$-, and $(1^{2}, 2^{5})$-packing edge-colorable, where a $2$-factor is a $2$-regular spanning subgraph. Li, Li and Liu \cite{Li2024} established that subcubic outerplanar graphs admit $(1, 2^{5})$- and $(1^{2}, 2^{3})$-packing edge-colorings, but exclude $(1, 2^{4})$- or $(1^{2}, 2^{2})$-packing edge-colorings. Liu and Yu \cite{Liu2024} further showed that all subcubic graphs are $(1^{2}, 2^{4})$-packing edge-colorable.

Gastineau and Togni \cite{MR3944589} also studied $S$-packing edge-colorings of subcubic graphs with specific 1's in $S$. By Vizing's theorem, every subcubic graph is $(1, 1, 1, 1)$-packing edge-colorable. Additionally, the following theorem follows from the results of Payan and of Fouquet and Vanherpe:

\begin{theorem}[Payan \cite{Payan1977}, Fouquet and Vanherpe \cite{MR3053595}]
Every subcubic graph admits a $(1, 1, 1, 2)$-packing edge-coloring.
\end{theorem}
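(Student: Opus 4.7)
The plan is to prove the equivalent reformulation: every subcubic graph $G$ admits an induced matching $M \subseteq E(G)$ such that $\chi'(G - M) \leq 3$. Given such an $M$, color $M$ with color $4$ (a valid $2$-packing, since $M$ is induced) and properly $3$-edge-color $G - M$ with colors $1, 2, 3$.

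First I would apply Vizing's theorem to get $\chi'(G) \leq 4$; if $\chi'(G) \leq 3$ take $M = \emptyset$, so I may assume $G$ is Class~$2$. Fix a proper $4$-edge-coloring $\varphi$ of $G$ that minimizes $|M_{4}|$, where $M_{4} = \varphi^{-1}(4)$; the goal becomes showing that such a minimum $M_{4}$ is automatically an induced matching. Minimality already forces local structure at each $e = uv \in M_{4}$: the endpoints $u$ and $v$ must miss distinct colors $c_{u}, c_{v} \in \{1,2,3\}$ (otherwise one could recolor $e$ to their common missing color, contradicting the choice of $\varphi$), and, moreover, the $(c_{u}, c_{v})$-Kempe chain starting at $u$ must terminate at $v$ (otherwise swapping colors along that chain, followed by the same recoloring, would reduce $|M_{4}|$).

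Now suppose, for contradiction, that $e_{1} = uv$ and $e_{2} = xy$ belong to $M_{4}$ and are at distance at most $2$. Since $\varphi$ is proper, they are non-adjacent, so they are joined by a single edge $g = vx$ with some color $c \in \{1,2,3\}$. I would analyze the Kempe chains through $g$ and through $e_{1}, e_{2}$, together with the constraints on missing colors at the four endpoints $u, v, x, y$. A short case distinction (on whether $c$ coincides with $c_{u}, c_{v}, c_{x}$ or $c_{y}$) should produce a sequence of at most two Kempe swaps that either (i) recolor $e_{1}$ or $e_{2}$ with a color in $\{1,2,3\}$, reducing $|M_{4}|$ and contradicting minimality, or (ii) transform $\varphi$ into another minimum $4$-coloring in which the distance between the corresponding pair of color-$4$ edges has strictly increased. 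A suitable potential function (e.g., summing pairwise distances between color-$4$ edges, bounded above) terminates the iteration and yields the desired induced matching.

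The main obstacle is handling Kempe chains that close into cycles passing through both $e_{1}$ and $e_{2}$, because such a swap preserves $|M_{4}|$ without separating the offending pair. In that regime one must exploit the rigidity of Class-$2$ subcubic graphs: Vizing's adjacency lemma forces dense neighborhoods of degree-$3$ vertices near $e_{1}$ and $e_{2}$, and this abundance guarantees that at least one alternative color pair produces a chain that breaks the cycle. Payan's $1977$ contribution supplies the structural foothold (a well-chosen perfect matching or $2$-factor of the underlying cubic supergraph, with subcubic $G$ reduced to the cubic case by the standard doubling trick), while the Fouquet--Vanherpe parsimonious $4$-edge-coloring framework is precisely what converts that structure into the desired induced color class.
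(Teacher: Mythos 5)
You should first note that the paper does not prove this statement at all: it is imported verbatim from Payan and from Fouquet--Vanherpe, so there is no in-paper argument to measure you against. Judged on its own, your reformulation (find an induced matching $M$ with $\chi'(G-M)\leq 3$) and your overall strategy (take a proper $4$-edge-colouring minimizing $|M_{4}|$ and show that minimality forces $M_{4}$ to be an induced matching) are exactly the route taken in the cited sources -- this is the ``$\delta$-minimum edge-colouring'' framework of Fouquet and Vanherpe, and the target statement (the smallest colour class of a minimum colouring is a strong matching) is true. The local consequences of minimality that you extract are also correct: the two ends of each $e=uv\in M_{4}$ miss distinct colours $c_{u}\neq c_{v}$ in $\{1,2,3\}$, and the $(c_{u},c_{v})$-Kempe component containing $u$ is a path ending at $v$.

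The problem is that everything after that point -- which is the entire content of the theorem -- is deferred to ``a short case distinction'', and the fallback devices you propose for when the case distinction does not immediately succeed would fail. Most seriously, your outcome (ii), a sequence of Kempe swaps after which ``the distance between the corresponding pair of colour-$4$ edges has strictly increased'', together with a potential function summing pairwise distances, is incoherent: a Kempe swap on two colours from $\{1,2,3\}$ does not change the set $M_{4}$, and the distance between two fixed edges of $G$ is a property of the graph, not of the colouring, so it cannot increase under any recolouring that keeps both edges in $M_{4}$. The only admissible outcomes of the argument are a recolouring that strictly decreases $|M_{4}|$ (contradicting minimality) or a direct structural contradiction; there is no ``separate the offending pair'' option. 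Similarly, Vizing's adjacency lemma applies to $\chi'$-critical graphs, which $G$ need not be, and the appeal to ``Payan's well-chosen perfect matching or $2$-factor'' is a citation rather than a step. What actually has to be written out is the elementary but nontrivial chain analysis around the connecting edge $g=vx$ of colour $c$: one has $c\neq c_{v}$ and $c\neq c_{x}$, and, for instance, if $c_{v}=c_{x}=a$ then the $(a,c)$-component containing $g$ is the single edge $g$, whose swap frees colour $c$ at both $v$ and $x$ and launches the next round of Kempe-path arguments (whose terminating-path properties must be re-derived after each swap, since earlier swaps perturb the other chains). Until that analysis is carried out in full, the proof is a correct plan with its central step missing.
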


The fact that the Petersen graph and the Tietze graph are not $(1, 1, 1, 3)$-packing edge-colorable establishes that $2$ cannot be replaced by $3$ in the above theorem. Gastineau and Togni \cite{MR3944589} proposed the following conjecture.

\begin{conjecture}[Gastineau and Togni \cite{MR3944589}]
Every cubic graph, except the Petersen and Tietze graphs, is $(1, 1, 1, 3)$-packing edge-colorable.
\end{conjecture}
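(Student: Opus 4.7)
The plan is to split the argument by chromatic index and handle the harder class separately. If $\chi'(G) = 3$, then $G$ admits a proper $3$-edge-coloring whose three matchings serve as $M_{1}, M_{2}, M_{3}$, and the distance-$4$ class $E_{4}$ may be left empty; in particular every class-1 cubic graph is $(1,1,1,3)$-packing edge-colorable. The remaining task is to handle class-2 cubic graphs, namely the snarks and every cubic graph containing a bridge.

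For a class-2 cubic graph $G$, I would first extract a perfect matching $M$: Petersen's theorem supplies one when $G$ is bridgeless, and bridged cubic graphs can be reduced by separating along bridges and inducting on the resulting pieces (which are subcubic with a few degree-$2$ anchors that can be reconciled at the end). The complement $G - M$ is a disjoint union of cycles; I assign $M$ to color class $1$ and alternate colors $2$ and $3$ along each cycle. Even cycles are colored completely this way; each odd cycle $C$ forces one of its edges, say $e_{C}$, to be placed in the fourth class $E_{4}$ so that the residual path admits a proper $\{2,3\}$-coloring. The whole problem is thereby reduced to choosing, for every odd cycle $C$ of the $2$-factor $G - M$, a representative edge $e_{C}$ such that $\{e_{C} : C \text{ odd}\}$ forms a $3$-packing in $G$.

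I would model this reduction as a list-selection problem on an auxiliary conflict graph whose vertices are the odd cycles of $G - M$ and whose edges record pairs $(C, C')$ joined by a short $M$-path. For each $C$ the list of admissible representatives has size $|C|$, and each conflict excludes only a bounded number of combinations, so when $|C|$ is large or the conflict graph is sparse a greedy or rotation-based selection suffices. An alternating-path argument along $M$ should allow a representative to be shifted within its cycle, or the perfect matching itself to be altered, whenever a local obstruction appears; iterating this procedure until no obstruction remains is the natural strategy. The Petersen and Tietze graphs are expected to emerge as the unique fixed points of this process: each possesses perfect matchings whose odd $2$-factor cycles are so tightly interlinked that every rotation reproduces the same obstruction, and one must verify that no other cubic graph shares this property.

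The principal obstacle is that the $3$-packing condition is global, so a local fix for one pair of odd cycles may create a new violation elsewhere, and the rotation argument must respect this coupling. To control it, I would invoke the structural theory of snarks, restricting attention to cyclically $4$-edge-connected cubic graphs of girth at least $5$ (smaller or less-connected cases being reducible by standard snark-splicing inverses), and carefully bound how many odd cycles of $G - M$ can lie pairwise within distance $3$. A case analysis of these short-range interactions, combined with an averaging or discharging argument distributed over the many perfect matchings of $G$, should isolate the Petersen and Tietze graphs as the only exceptions; carrying out this final step, especially ruling out sporadic bad configurations, is where the bulk of the work is expected to lie.
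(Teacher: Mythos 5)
The statement you are proving is a \emph{conjecture}: the paper does not prove it, and it remains open. The paper only establishes the special case of claw-free cubic graphs, and your proposal, as written, is a research plan rather than a proof, with the essential difficulty deferred rather than resolved. Your opening reduction is in fact the same one the paper uses: take a perfect matching $M$, two-color the cycles of the $2$-factor $G - M$, and push one edge of each odd cycle into the fourth class, so that everything hinges on choosing these representatives to form a $3$-packing. The paper can complete this step only because claw-freeness gives it Oum's structure theorem: $G$ is (essentially) a cubic multigraph $H$ with every vertex blown up into a triangle and some edges into diamond strings, so each cycle of the $2$-factor of $H$ triples in length in $G$, and the single $3_{a}$ edge per odd cycle is automatically at distance at least $4$ from every other $3_{a}$ edge. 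Without such structure, your ``greedy or rotation-based selection,'' the alternating-path shifting, and the concluding ``averaging or discharging argument'' are not arguments: no lemma is stated, no invariant guarantees the iteration terminates, and no mechanism is given that would isolate exactly the Petersen and Tietze graphs as the fixed points. You acknowledge this yourself (``the bulk of the work is expected to lie'' there), and that is precisely the open content of the conjecture.

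Two further concrete defects. First, your extraction of $M$ is not available in general: Petersen's theorem requires bridgelessness, and there exist connected cubic graphs with bridges that have \emph{no} perfect matching at all (e.g., the classical Sylvester-type example of three pendant gadgets attached to a common vertex), so ``separating along bridges and inducting'' must carry a carefully formulated hypothesis about the colors appearing at the degree-$2$ anchor vertices; the paper's bridged case needs exactly such a claim (no $3_{a}$ edge incident with a cut-end, achieved via the triangle forced at each bridge end by claw-freeness) and a BFS color-permutation scheme over the bridge tree, none of which survives without claw-freeness. Second, your reduction to cyclically $4$-edge-connected snarks of girth at least $5$ via ``standard snark-splicing inverses'' is unsupported: splicing is a theory of $3$-edge-colorability, and it is not known (and not obvious) that $(1,1,1,3)$-packing edge-colorability is preserved under inverting these operations, since the distance-$4$ condition on the fourth class is non-local and can be violated across the small cut where two pieces are glued. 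So the proposal identifies the right first-stage decomposition but contains genuine gaps at every point where the conjecture's actual difficulty lives.
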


This conjecture has been verified for certain restricted classes of snarks. In this paper, we extend this line of research by focusing on claw-free cubic graphs and proving the following result. A graph is \emph{claw-free} if it contains no $K_{1, 3}$ as an induced subgraph.

\begin{theorem}\label{MainResult}
Every claw-free cubic graph is $(1, 1, 1, 3)$-packing edge-colorable.
\end{theorem}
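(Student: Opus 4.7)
The plan is to use the well-known structural description of claw-free cubic graphs. Since every vertex of $G$ has three neighbors that are not mutually non-adjacent (by claw-freeness), every vertex lies in a triangle, and a brief case analysis shows that every connected claw-free cubic graph is either $K_{4}$, or has its vertex set partitioned into \emph{blocks}, each being a triangle ($K_{3}$) or a \emph{diamond} ($K_{4}-e$), so that the edges not inside a block --- the \emph{external} edges --- form a perfect matching on the boundary vertices ($3$ boundary vertices per triangle block, $2$ per diamond block). The case $G = K_{4}$ is handled directly via a proper $3$-edge-coloring, giving $E_{4} = \emptyset$.

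Next I would introduce a cubic multigraph $H'$ whose vertices are the triangle blocks of $G$: each edge of $H'$ corresponds to a ``through-path'' of $G$ that starts and ends at triangle blocks and alternates between external edges and diamond blocks. A direct block-by-block check yields the following lemma: $G$ admits a proper $3$-edge-coloring if and only if $H'$ does. The proof relies on two local facts --- in any proper $3$-edge-coloring of a diamond block the two external edges must receive the same color, while in a triangle block the three external edges must receive three distinct colors --- together with a two-fold freedom in extending a coloring across each diamond block.

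If $\chi'(H') = 3$, then $G$ is properly $3$-edge-colorable and a $(1, 1, 1, 3)$-packing edge-coloring follows immediately with $E_{4} = \emptyset$. Otherwise $\chi'(H') = 4$, and I would select a set $F \subseteq E(H')$, one edge per edge-$4$-critical obstruction of $H'$, such that $H' - F$ is properly $3$-edge-colorable. For each $f \in F$ I pick a single external edge inside the corresponding through-path of $G$ and place it into $E_{4}$, and then extend the proper $3$-edge-coloring of $H' - F$ block-by-block to a proper $3$-edge-coloring of $G - E_{4}$ using the freedom available at each diamond block to match boundary conditions.

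The main obstacle is verifying that $E_{4}$ forms a $3$-packing: that is, any two edges of $E_{4}$ are at distance at least $3$ in $G$. For pairs whose through-paths are long and lie in different parts of $H'$, this is automatic because each diamond block contributes several edges to any path in $G$ linking two through-paths. The delicate case arises when the critical obstructions in $H'$ sit close together; there one must exploit the binary freedom at each diamond block --- a Kempe-type swap along the through-path, which effectively permutes the two compatible diamond colorings --- together with an optimization in the choice of $F$ (for instance a greedy or inductive selection of obstructions favoring those with longer associated through-paths) to guarantee the pairwise distance condition. Executing this combinatorial optimization, and verifying that no configuration of obstructions forces a violation of the $3$-packing condition in the class of claw-free cubic graphs, constitutes the technical heart of the argument.
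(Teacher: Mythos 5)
Your structural setup is sound (every connected claw-free cubic graph other than $K_4$ decomposes into triangle and diamond blocks with the external edges forming a matching, and the two local facts about how a proper $3$-edge-coloring behaves on a diamond and on a triangle are both correct), and the reduction ``$G$ is $3$-edge-colorable iff $H'$ is'' is a reasonable lemma. But the proof has a genuine gap, and it sits exactly where the theorem's content lies: you never establish that the class $E_4$ can be chosen to satisfy the packing condition. You explicitly defer ``executing this combinatorial optimization, and verifying that no configuration of obstructions forces a violation'' --- that deferred step \emph{is} the theorem. (The Petersen and Tietze graphs show that for general cubic graphs no such choice exists, so some claw-free-specific argument is unavoidable; a greedy or ``Kempe-swap'' heuristic is not a proof.) Note also that the required separation is distance at least $4$ (greater than $s_i=3$), not ``at least $3$'' as you wrote, and that your scheme leaves two cases uncovered: a ring of diamonds, where there are no triangle blocks and $H'$ is empty (so the reduction says nothing), and graphs with bridges, where $H'$ may have bridges or loops and the notion of deleting ``one edge per edge-$4$-critical obstruction'' is not well defined.

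The paper avoids this difficulty entirely by a different mechanism. Using Oum's characterization, a $2$-edge-connected claw-free cubic $G$ (other than $K_4$ and rings of diamonds, which are handled directly) arises from a $2$-edge-connected cubic multigraph $H$ by triangle substitution and diamond-string insertion. By Plesn\'{i}k's theorem $H$ has a $2$-factor; blowing it up gives a $2$-factor of $G$, whose cycles are colored alternately with two matching colors, the complementary perfect matching gets the third, and each \emph{odd} cycle receives exactly one $3_a$ edge. Because every such $3_a$ edge is a cycle edge joining two substituted triangles, any path between two of them must traverse a full triangle plus a matching edge, so the distance-$4$ condition holds automatically --- no optimization over the placement of $E_4$ is needed. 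Bridged graphs are then handled by a separate induction over the bridge tree. If you want to salvage your approach, you would need an analogous structural device that forces the deleted edges of $H'$ to be far apart; as written, the argument does not close.
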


\section{Proof of \cref{MainResult}}
A \emph{diamond} is a graph $D$ formed by removing one edge from the complete graph $K_{4}$. In a diamond $D$, the two degree-3 vertices are called \emph{internal vertices}, while the remaining two are \emph{external vertices}.

For a positive integer $k$, a \emph{string of $k$ diamonds} is constructed as follows: take $k$ disjoint diamonds $D_{1}, \dots, D_{k}$, where $V(D_{i}) = \{x_{i}, y_{i}, z_{i}, w_{i}\}$, and $z_{i}, w_{i}$ are the internal vertices of $D_{i}$, add two additional isolated vertices $x_{0}$ and $y_{k+1}$, and connect $x_{i-1}$ to $y_{i}$ for all $i \in [k + 1]$. A \emph{ring of diamonds} is a connected claw-free cubic graph in which every vertex belongs to a diamond. Note that a ring of diamonds can be obtained from a string of diamonds by removing $x_{0}$ and $y_{k+1}$, and adding an edge between $y_{1}$ and $x_{k}$.

For a graph $G$, \emph{replacing an edge with a string of $k$ diamonds} is defined as follows: for an edge $xy \in E(G)$, remove $xy$ from $G$, and identify $x_{0}$ of a string of $k$ diamonds with $x$ and $y_{k+1}$ of the same string with $y$.

We rely on the following two results in our proof.

\begin{theorem}[Oum \cite{MR2788679}]\label{thm:oum}
A graph $G$ is $2$-edge-connected, claw-free, and cubic if and only if it satisfies one of the following:
\begin{enumerate}[label = (\roman*)]
    \item $G \cong K_{4}$;
    \item $G$ is a ring of diamonds; or
    \item $G$ is constructed from a $2$-edge-connected cubic multigraph $H$ by
    \begin{itemize}
        \item substituting each vertex of $H$ with a triangle (see \cref{substitute}), and/or
        \item replacing some edges of $H$ with strings of diamonds.
    \end{itemize}
\end{enumerate}
\end{theorem}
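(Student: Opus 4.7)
The plan is to apply Oum's structural theorem (\cref{thm:oum}) to decompose the problem into a handful of structurally simple cases and handle each explicitly. I would first assume $G$ is connected (components treated independently) and reduce to the 2-edge-connected setting: if $xy$ is a bridge, claw-freeness forces the two non-bridge neighbors of $x$ to be adjacent, placing $x$ in a pendant triangle. Such a pendant triangle can be colored locally and re-attached to a colored version of the smaller graph, allowing induction on the number of bridges.

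For 2-edge-connected $G$, \cref{thm:oum} gives three cases. The case $G \cong K_{4}$ is immediate via a proper 3-edge-coloring with color 4 unused. For a ring of diamonds, I would show the graph is class~1: assign color 2 to the matching $\{y_{i}z_{i}, x_{i}w_{i}\}$ and color 3 to $\{y_{i}w_{i}, x_{i}z_{i}\}$ inside each diamond, and place color 1 on all central edges $z_{i}w_{i}$ together with all link edges of the ring (these form a single matching of $G$).

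The main case is (iii): $G$ is built from a 2-edge-connected cubic multigraph $H$ by substituting some vertices with triangles and/or replacing some edges with strings of diamonds. I would color each substituted triangle and each diamond of a string properly using matching colors $\{1, 2, 3\}$, exploiting the $3!$ local permutations available. The remaining edges of $G$ — the intra-string link edges $x_{i-1}y_{i}$ and the edges of $H$ not replaced — are then assigned colors from $\{1, 2, 3, 4\}$. The local-permutation flexibility is the key tool for making compatible global choices, and color 4 is reserved for a carefully selected $3$-packing of edges that absorbs any obstruction to extending a proper $3$-edge-coloring from the substituted pieces.

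The main obstacle lies in case (iii): one must coordinate the color permutations across adjacent triangles and diamonds so that every link and unreplaced edge receives a valid color, and one must place color 4 on a set of edges pairwise at edge-distance at least $4$ in $G$ — a nontrivial constraint, given that within a string of diamonds, consecutive link edges $x_{i-1}y_{i}$ and $x_{i}y_{i+1}$ are at edge-distance only $3$. I anticipate a detailed case analysis depending on the local structure of $H$ and on the lengths of the diamond strings, with the tightest subcases being short strings (of length one or two) between two substituted triangles and vertices of $H$ where multiple triangles or string endpoints meet with little slack. Long strings pose no difficulty because color 4 can be placed on every second link edge, but the short-string cases will likely require dedicated ad hoc constructions.
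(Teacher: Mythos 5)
Your proposal does not prove the statement it was supposed to prove. The statement is \cref{thm:oum} itself --- Oum's structural characterization of $2$-edge-connected claw-free cubic graphs --- and your very first sentence is ``apply Oum's structural theorem (\cref{thm:oum})''. That is circular: you assume the target statement and then sketch how to derive the paper's \emph{main coloring result} (\cref{MainResult}) from it. What you have written is in essence a blind reconstruction of the proof of \cref{m} and of the bridge-handling argument (and, as such, it is broadly on the right track for \emph{that} theorem, including the correct observation that a bridge endpoint lies in a triangle by claw-freeness, and the correct worry about consecutive link edges of a diamond string being at edge-distance $3$). But none of it addresses \cref{thm:oum}. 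Note also that in the paper this theorem is an imported result, cited from Oum without proof, so there is no internal proof to match; a genuine proof attempt would have to establish the characterization from scratch.

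Concretely, a proof of \cref{thm:oum} must establish an ``if and only if''. The easy direction is to verify that $K_{4}$, rings of diamonds, and graphs built from a $2$-edge-connected cubic multigraph $H$ by substituting every vertex with a triangle and replacing some edges with strings of diamonds are all $2$-edge-connected, claw-free, and cubic. The substantive direction starts from an arbitrary $2$-edge-connected claw-free cubic graph $G$ and recovers the structure: claw-freeness at a degree-$3$ vertex forces two of its three neighbors to be adjacent, so every vertex of $G$ lies in a triangle; one then analyzes how these triangles overlap --- two triangles sharing an edge form a diamond, diamonds concatenate into maximal strings, the degenerate cases yield $G \cong K_{4}$ or a ring of diamonds, and otherwise contracting each triangle to a vertex and each maximal diamond string to an edge produces a cubic multigraph $H$ that inherits $2$-edge-connectivity, exhibiting $G$ as in case (iii). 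Your proposal contains none of this triangle/diamond classification, which is the entire content of the theorem; the coloring machinery you describe (color permutations, placing the $3$-packing color, parity of cycle lengths) is irrelevant to a purely structural statement.
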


\begin{figure}
    \centering
    \begin{tikzpicture}[line width=1pt, scale=1, every node/.style={inner sep=1pt}]
        \def\r{6}
        \def\coreangle{120}
        \def\subangle{60}

        % Define original triangle vertices
        \foreach \i/\name in {90/A1, 210/A2, 330/A3} {
            \coordinate (\name) at (\i:1);
            \draw (0,0) -- (\name);
        }

        % Define substituted vertices (shifted by \r units right)
        \foreach \name/\bname/\cname in {
            A1/B1/C1,
            A2/B2/C2,
            A3/B3/C3} {
            \coordinate (\bname) at ($(\name) + (\r,0)$);
            \coordinate (\cname) at ($(\r,0)!0.4!(\bname)$);
            \draw (\bname) -- (\cname);
        }

        % Connect C1, C2, C3 into triangle
        \draw (C1) -- (C2) -- (C3) -- cycle;

        % Draw horizontal arrow and label
        \draw[->] (2,0) -- ({\r - 2},0);
        \node at ({\r/2}, 0.2) {Substitute};

        % Solid nodes
        \foreach \pt in {C1,C2,C3} {
            \solidnode{\pt};
        }
        \solidnode{0,0};
    \end{tikzpicture}
    \caption{Substituting a vertex.}
    \label{substitute}
\end{figure}

\begin{theorem}[Plesn\'{i}k \cite{MR317999}]\label{thm:2factor}
Let $G$ be a $2$-edge-connected cubic multigraph. For any two edges $e, f \in E(G)$, the graph $G$ contains a $2$-factor that includes $e$ and $f$.
\end{theorem}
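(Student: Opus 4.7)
The plan is to translate Plesn\'ik's theorem into a statement about perfect matchings and then verify Tutte's $1$-factor condition for $G - e - f$ by an edge-counting argument driven by $2$-edge-connectivity.

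In a cubic multigraph, a spanning subgraph $F$ is a $2$-factor if and only if $E(G) \setminus E(F)$ is a perfect matching of $G$. Consequently, a $2$-factor containing $e$ and $f$ exists if and only if $G' := G - e - f$ admits a perfect matching, and I aim to verify Tutte's condition $o(G' - S) \leq |S|$ for every $S \subseteq V(G)$, where $o(\cdot)$ denotes the number of odd components.

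Fix such an $S$ and let $V_1, \dots, V_q$ be the vertex sets of the odd components of $G' - S$. Applied to the cubic multigraph $G$, the handshake identity gives that the edge boundary $\delta_G(V_i, V(G) \setminus V_i)$ has the same parity as $|V_i|$ and is therefore odd; combined with $2$-edge-connectivity (which forces the boundary to have size at least $2$), this yields $\delta_G(V_i, V(G) \setminus V_i) \geq 3$ whenever $\emptyset \neq V_i \neq V(G)$. Summing over $i$ and partitioning the boundary edges into two types---(a) those with an endpoint in $S$, contributing at most $3|S|$ in total since every vertex of $S$ has degree $3$, and (b) those lying in $V(G) \setminus S$ and joining distinct components of $G' - S$, which must belong to $\{e, f\}$ (any other such edge would reconnect the two components in $G' - S$) and contribute at most $2 \cdot 2 = 4$ in total---I obtain
\[
3q \leq \sum_{i=1}^{q} \delta_G(V_i, V(G) \setminus V_i) \leq 3|S| + 4,
\]
so $q \leq |S| + 1$.

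The final step is a parity argument. Since $G$ is cubic, $|V(G)|$ is even, so $q = o(G' - S) \equiv |V(G')| - |S| \equiv |S| \pmod{2}$, making $q - |S|$ even; together with $q \leq |S| + 1$ this forces $q \leq |S|$. Tutte's theorem then delivers a perfect matching $M$ of $G'$, and $E(G) \setminus M$ is a $2$-factor of $G$ containing $e$ and $f$. The main obstacle I expect is the bookkeeping around the edges $e$ and $f$ in step (b): one must verify that an edge of $\{e, f\}$ with an endpoint in $S$ is absorbed into the $3|S|$ term via that endpoint's degree in $G$ and is not simultaneously counted in the type-(b) contribution. The boundary cases, such as $S = \emptyset$ with $G'$ connected (where the sole component has even order $|V(G)|$ and therefore contributes nothing to $q$), or $S = V(G)$, require a short separate check but are immediate.
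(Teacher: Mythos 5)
The paper does not prove this statement at all: it is imported as a black-box citation of Plesn\'{\i}k (1972), so there is no in-paper argument to compare against. Your proof is correct and is essentially the classical argument for Plesn\'{\i}k's theorem (the case $r=3$ of his result that an $(r-1)$-edge-connected $r$-regular multigraph of even order has a perfect matching avoiding any prescribed $r-1$ edges): pass to the complement, so that the task becomes a perfect matching of $G' = G - e - f$, and verify Tutte's condition by counting boundary edges of odd components. All the delicate points check out. Each odd component $V_i$ satisfies $|\delta_G(V_i)| \geq 3$ because the boundary is odd by handshaking and at least $2$ by $2$-edge-connectivity; edges between distinct components of $G' - S$ avoiding $S$ must lie in $\{e, f\}$ and are counted at most twice each in $\sum_i |\delta_G(V_i)|$, giving the $+4$; edges meeting $S$ are counted at most once each and are absorbed into $\sum_{v \in S} \deg_G(v) = 3|S|$, with the two classes disjoint by definition, as you note. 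The parity step $q \equiv |V(G)| - |S| \equiv |S| \pmod 2$ (using that a cubic multigraph has even order) correctly sharpens $q \leq |S| + 1$ to $q \leq |S|$. Two minor remarks: Tutte's theorem is being applied to a multigraph, which is harmless since parallel edges do not affect component counts (and a $2$-edge-connected cubic multigraph has no loops, as a loop would force a bridge at its vertex); and your worry about $S = \emptyset$ is already resolved by the main inequality plus parity, since $3q \leq 4$ and $q$ even force $q = 0$.
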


In this paper, we assign colors to the edges of $G$ from the set $C = \{ 1_{a}, 1_{b}, 1_{c}, 3_{a} \}$.

\begin{proposition}\label{prop:ring}
If $G$ is a ring of diamonds, then $G$ is $(1, 1, 1)$-packing edge-colorable.
\end{proposition}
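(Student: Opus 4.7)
The plan is to observe that, since each color class in a $(1,1,1)$-packing edge-coloring need only be a matching, the statement is equivalent to asserting that every ring of diamonds admits a proper $3$-edge-coloring using the three colors $1_a, 1_b, 1_c$. I would prove this by exhibiting such a coloring explicitly.

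Using the notation from the definition of a ring of diamonds, let the diamonds be $D_1, \dots, D_k$ with external vertices $x_i, y_i$ and internal vertices $z_i, w_i$, and let the connecting edges be $x_iy_{i+1}$ with indices read modulo $k$. I would assign color $1_a$ to every connecting edge $x_iy_{i+1}$ together with every internal edge $z_iw_i$, color $1_b$ to the edges $y_iz_i$ and $x_iw_i$, and color $1_c$ to the edges $y_iw_i$ and $x_iz_i$. A direct check at each of the four vertices of every diamond $D_i$ then confirms that the three incident edges receive the three distinct colors $1_a, 1_b, 1_c$, so the coloring is proper and each color class is a matching.

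There is essentially no obstacle: the same pattern is applied inside every diamond, and it glues consistently around the cycle because every connecting edge carries the common color $1_a$. The one observation that motivates the construction is that in any proper $3$-edge-coloring of a single diamond the two external edges are forced to receive the same color (a quick case analysis on the edges incident to the internal vertices $z_i, w_i$), so around the ring all connecting edges must in fact share a single color; this is why assigning $1_a$ uniformly to every connecting edge is both natural and necessary. I would include this observation only as justification for the choice, not as a step in the formal verification.
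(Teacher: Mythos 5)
Your proposal is correct and is essentially the same construction as the paper's: up to a permutation of the three color names, you and the paper both place the internal edges $z_iw_i$ together with all connecting edges $x_iy_{i+1}$ in one matching and split the four external edges of each diamond into the two pairs of non-adjacent (opposite) edges for the other two matchings. Your added remark that any proper $3$-edge-coloring forces the connecting edges to share the color of the internal edges is a correct and harmless piece of motivation not present in the paper.
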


\begin{proof}
In each diamond, there are two sets of non-adjacent external edges and one internal edge. For a ring of diamonds, we assign colors as follows:
\begin{itemize}
    \item color one set of non-adjacent external edges with $1_{a}$,
    \item color the remaining set of non-adjacent external edges with $1_{b}$,
    \item color the internal edges of each diamond, as well as the edges connecting adjacent diamonds, with $1_{c}$.
\end{itemize}
This ensures that edges sharing the same color are pairwise non-adjacent (for $1_{a}$ and $1_{b}$) or separated by at least one edge (for $1_{c}$), satisfying the distance requirement for a $(1, 1, 1)$-packing edge-coloring. See \cref{Ring} for an illustration of this coloring scheme applied to a ring of diamonds.
\end{proof}

\begin{figure}
    \centering
    \begin{tikzpicture}[line width=1pt, node distance=2cm, scale=1]
        % Define styles for edges
        \tikzset{
            rededge/.style={red},
            greenedge/.style={green},
            blueedge/.style={blue},
        }

        % Function to draw a diamond at (x,y)
        \newcommand{\drawDiamond}[2]{
            \coordinate (A#2) at (#1, 0);
            \coordinate (B#2) at (#1 + 2, 0);
            \coordinate (C#2) at (#1 + 1, 1);
            \coordinate (D#2) at (#1 + 1, -1);
            
            \draw[rededge] (A#2) -- (D#2);
            \draw[greenedge] (A#2) -- (C#2);
            \draw[rededge] (B#2) -- (C#2);
            \draw[greenedge] (B#2) -- (D#2);
            \draw[blueedge] (C#2) -- (D#2);
        }

        % Draw three diamonds
        \drawDiamond{0}{1}
        \drawDiamond{4}{2}
        \drawDiamond{8}{3}

        % Connect diamonds
        \draw[blueedge] (B1) -- (A2);  % Connect first and second diamond
        \draw[blueedge] (B2) -- (A3);  % Connect second and third diamond
        \draw[blueedge, bend left=120] (A1) to (B3); % Curved connection
        \newcommand{\drawnode}[1]{
        \solidnode{A#1};
        \solidnode{B#1};
        \solidnode{C#1};
        \solidnode{D#1};
        }
        \drawnode{1};
        \drawnode{2};
        \drawnode{3};
    \end{tikzpicture}
    \caption{A $(1, 1, 1)$-packing edge-coloring of a ring of diamonds}
    \label{Ring}
\end{figure}

\begin{theorem}\label{m}
If $G$ is a $2$-edge-connected claw-free cubic graph, then $G$ is $(1, 1, 1, 3)$-packing edge-colorable.
\end{theorem}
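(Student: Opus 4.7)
The plan is to dispatch the three cases of Oum's structure theorem (\cref{thm:oum}). Case (i), $G \cong K_{4}$, is immediate: any proper $3$-edge-coloring is a $(1,1,1,3)$-packing edge-coloring with $3_{a}$ left unused. Case (ii) is exactly \cref{prop:ring}. The bulk of the argument lies in case (iii), where $G$ arises from a $2$-edge-connected cubic multigraph $H$ via triangle substitutions at some or all of its vertices and/or replacements of some of its edges by strings of diamonds.

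For case (iii), the first step is to produce a $2$-factor $F$ of $H$ via \cref{thm:2factor}; because $H$ is cubic, $M := E(H) \setminus F$ is automatically a perfect matching of $H$. The scheme is to lift $(F,M)$ to a $2$-factor $F_{G}$ of $G$ together with a complementary perfect matching $M_{G}$, by specifying how $F$ and $M$ traverse each inserted triangle and diamond. Inside a triangle $v_{1}v_{2}v_{3}$ replacing a vertex $v$ at which $F$ contains two edges and $M$ contains the third (incident to $v_{3}$, say), we put $v_{1}v_{3}$ and $v_{3}v_{2}$ into $F_{G}$ and $v_{1}v_{2}$ into $M_{G}$; inside each diamond $D_{i}$ belonging to a string that replaced an edge $xy \in F$, we put the path $y_{i}z_{i}w_{i}x_{i}$ into $F_{G}$ and the two chords $y_{i}w_{i}, x_{i}z_{i}$ into $M_{G}$; diamonds inside strings that replaced an edge of $M$ are routed analogously in the style of \cref{prop:ring}.

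With $(F_{G},M_{G})$ in place, the colouring template is: alternate $1_{a}$ and $1_{b}$ along each cycle of $F_{G}$, assign $1_{c}$ to every edge of $M_{G}$, and on each odd cycle of $F_{G}$ recolour one designated edge $e^{\star}$ by $3_{a}$. Local checks at triangles and diamonds confirm that each of $1_{a}, 1_{b}, 1_{c}$ is a matching in $G$ and that all adjacencies are properly coloured. A short parity calculation shows that a cycle $C$ of $F$ of length $\ell$, with $s$ of its vertices substituted and $K$ diamonds inserted along its edges in total, yields a cycle of $F_{G}$ of length $\ell + 2s + 4K$, which has the same parity as $\ell$; in particular, odd cycles of $F$ survive as odd cycles of $F_{G}$ and cannot be wished away by choosing different operations.

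The main obstacle is therefore the $3_{a}$-class: the designated edges $e^{\star}$ must be pairwise at edge-distance at least $4$ in $G$, forming a $3$-packing. The plan to overcome this is to combine two sources of flexibility. First, \cref{thm:2factor} lets us pin any two prescribed edges of $H$ into $F$; we exploit this to reshape the cycle structure of $F$, minimising the number of odd cycles and placing candidate $e^{\star}$-edges far apart in $H$. Second, triangle substitutions and diamond insertions strictly inflate distances between edges of $H$ when measured in $G$, so even a modest separation in $H$ yields the required separation in $G$. When neither device suffices---a situation I expect to arise only in small configurations with several tangled odd cycles---the fallback is a local swap: slide $e^{\star}$ along its odd cycle onto a neighbouring triangle or diamond edge and use the local palette freedom to exchange a $1_{a}$ or $1_{b}$ for $3_{a}$ without disturbing the surrounding colouring. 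The hardest step of the proof will be to verify that this combination of Plesn\'{i}k reshaping, distance inflation, and local swaps always succeeds; the remaining steps are routine local verifications.
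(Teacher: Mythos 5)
Your skeleton matches the paper's: Oum's structure theorem, a Plesn\'{i}k $2$-factor $\mathcal{F}'$ of $H$ lifted through the triangles and diamond strings to a cycle system of $G$, alternating $1_{a}/1_{b}$ along the lifted cycles, $1_{c}$ on the complementary matching, and one $3_{a}$ edge per odd cycle. But you stop short of proving the one step that actually carries the theorem, namely that the designated $3_{a}$ edges form a $3$-packing. You defer this to a combination of ``Plesn\'{i}k reshaping, distance inflation, and local swaps'' and concede that verifying that this combination ``always succeeds'' is the hardest remaining step. As written this is a gap, not a proof: you give no argument that odd cycles can be reduced or separated, no criterion for when your fallback swap is available, and no case analysis showing the three devices cover every configuration.

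The gap closes far more simply than you anticipate, which also shows your extra machinery is unnecessary. Take the $3_{a}$ edge of each odd cycle to be a \emph{connector} edge $y_{j}^{i}x_{j+1}^{i}$, i.e., an edge of $G$ corresponding to an edge of $H$, not a triangle edge. Two such edges on distinct cycles $C_{i}$ and $C_{k}$ of the $2$-factor lie in vertex-disjoint parts of $G$, and any path between their endpoints must first step to a hub vertex $h_{j}^{i}$ of a substituted triangle, then cross an edge of the perfect matching $M'$, then step to an endpoint of the other connector; so the endpoints are at vertex-distance at least $3$ and the edges at edge-distance at least $4$, which is exactly the bound a $3$-packing requires. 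Thus \emph{any} choice of one connector edge per odd cycle works: Plesn\'{i}k's theorem is needed only to produce some $2$-factor, no control over the number or placement of odd cycles is required, and diamond strings only increase distances further. You had the right instinct in noting that triangle substitution inflates distances, but you assumed you needed to engineer separation in $H$, when the minimum possible separation between edges on disjoint cycles of $\mathcal{F}'$ already suffices after inflation. (A secondary quibble: diamonds in strings replacing edges of $M'$ are not covered by your lifted $F_{G}$, so $F_{G}$ is not literally a $2$-factor of $G$; as in \cref{prop:ring}, those strings should simply be coloured locally with $1_{a},1_{b},1_{c}$.)
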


\begin{proof}
By \cref{thm:oum}, $G$ satisfies one of the following: 
\begin{enumerate}[label = (\roman*)]
    \item $G \cong K_{4}$;
    \item $G$ is a ring of diamonds; or
    \item $G$ is constructed from a $2$-edge-connected cubic multigraph $H$ by
    \begin{itemize}
        \item substituting each vertex of $H$ with a triangle, and/or
        \item replacing some edges of $H$ with strings of diamonds.
    \end{itemize}
\end{enumerate}

If $G \cong K_{4}$, then $G$ is trivially $(1, 1, 1)$-packing edge-colorable. If $G$ is a ring of diamonds, by \cref{prop:ring}, $G$ admits a $(1, 1, 1)$-packing edge-coloring. Now, consider the case where $G$ is constructed by substituting each vertex of a $2$-edge-connected, cubic multigraph $H$ with a triangle, and/or replacing some edges of $H$ with strings of diamonds. (Note that edges may or may not be replaced.) By \cref{thm:2factor}, $H$ contains a $2$-factor $\mathcal{F}'$ that is a disjoint union of cycles, say $\mathcal{F}' = C_{1}' \cup \dots \cup C_{k}'$.

\noindent\textbf{Case 1: No edges are replaced by strings of diamonds.}

Suppose $G$ is constructed by substituting each vertex of $H$ with a triangle but without replacing any edge of $H$ with strings of diamonds. For each $i \in [k]$, let $C_{i}': h_{1}^{i}h_{2}^{i} \dots h_{m_{i}}^{i}h_{1}^{i}$ be a cycle in $\mathcal{F}'$. Denote by $C_{i}$ the subgraph of $G$ with a Hamiltonian cycle $x_{1}^{i}h_{1}^{i}y_{1}^{i}x_{2}^{i}h_{2}^{i}y_{2}^{i} \dots x_{m_{i}}^{i}h_{m_{i}}^{i}y_{m_{i}}^{i}x_{1}^{i}$, where $x_{j}^{i}h_{j}^{i}y_{j}^{i}$ represents the substitution of the vertex $h_{j}^{i}$ in $H$ with the triangle $x_{j}^{i}h_{j}^{i}y_{j}^{i}$.

Observe that the edge $y_{j}^{i}x_{j+1}^{i}$ on $C_{i}$ of $G$ corresponds to the edge $h_{j}^{i} h_{j+1}^{i}$ on $C_{i}'$ of $H$. Let $\mathcal{F} = C_{1} \cup \dots \cup C_{k}$ (see \cref{M1}, which illustrates the construction of $G$ by substituting each vertex of $H$ with a triangle). Since $\mathcal{F}'$ is a $2$-factor of $H$, it follows that $\mathcal{F}$ becomes a $2$-factor of $G$, covering all vertices of $G$ with disjoint cycles.

\begin{figure}
    \centering
    \begin{tikzpicture}[line width=1pt, scale=0.12, every node/.style={font=\small}]

    % Define styles
    \tikzset{
        triangle/.style={draw},
        diamond/.style={draw}
    }

    % Macro to draw a triangle
    \newcommand{\drawTriangle}[2]{
        \coordinate (h3#2) at (#1, 0);
        \coordinate (h2#2) at (#1 + 20, 0);
        \coordinate (h1#2) at ($(h3#2)!1!60:(h2#2)$);
        \coordinate (x1#2) at ($(h1#2)!0.25!(h3#2)$);
        \coordinate (y1#2) at ($(h1#2)!0.25!(h2#2)$);
        \coordinate (x2#2) at ($(h1#2)!0.75!(h2#2)$);
        \coordinate (y2#2) at ($(h2#2)!0.25!(h3#2)$);
        \coordinate (x3#2) at ($(h2#2)!0.75!(h3#2)$);
        \coordinate (y3#2) at ($(h1#2)!0.75!(h3#2)$);
        
        % Draw the triangle
        \draw[triangle] (h3#2) -- (h2#2) -- (h1#2) -- cycle;

        % Add labels
        \node at (h1#2) [above] {$h_{1}^{#2}$};
    }

    % Macro to draw a diamond
    \newcommand{\drawDiamond}[1]{
        \coordinate (h1#1) at ($(h21)!0.5!(h32) + (0, -10)$);
        \coordinate (h3#1) at ($(h1#1) + (0,-20)$);
        \coordinate (h2#1) at ($(h1#1)!1!60:(h3#1)$);
        \coordinate (h4#1) at ($(h1#1)!1!-60:(h3#1)$);
        
        \coordinate (x1#1) at ($(h1#1)!0.25!(h4#1)$);
        \coordinate (y1#1) at ($(h1#1)!0.25!(h2#1)$);
        \coordinate (x2#1) at ($(h1#1)!0.75!(h2#1)$);
        \coordinate (y2#1) at ($(h2#1)!0.25!(h3#1)$);
        \coordinate (x3#1) at ($(h2#1)!0.75!(h3#1)$);
        \coordinate (y3#1) at ($(h3#1)!0.25!(h4#1)$);
        \coordinate (x4#1) at ($(h3#1)!0.75!(h4#1)$);
        \coordinate (y4#1) at ($(h1#1)!0.75!(h4#1)$);

        % Draw the diamond
        \draw (h1#1) -- (h3#1);
        \draw (h1#1) -- (h2#1) -- (h3#1) -- (h4#1) -- cycle;
        
        % Add labels
        \node at (h1#1) [above] {$h_{1}^{#1}$};
        \node at (h2#1) [below] {$h_{2}^{#1}$};
        \node at (h3#1) [below] {$h_{3}^{#1}$};
        \node at (h4#1) [below] {$h_{4}^{#1}$};
    }

\begin{scope}[shift={(-35, 0)}]
    % Draw two triangles
    \drawTriangle{0}{1}
    \drawTriangle{32}{2}

    % Add labels
    \node at (h21) [below] {$h_{2}^{1}$};
    \node at (h31) [left] {$h_{3}^{1}$};
    \node at (h22) [right] {$h_{2}^{2}$};
    \node at (h32) [below] {$h_{3}^{2}$};
    
    % Connect the triangles
    \draw (h11) -- (h12);
    \draw (h21) -- (h32);

    % Draw the diamond
    \drawDiamond{3}

    % Additional curved connections
    \draw [bend right=10] (h31) to (h43);
    \draw [bend left=10] (h22) to (h23);
    \node at ($(h33) + (0, -10)$) {\textbf{$H$}};
    
        \foreach \i in {1, 2, 3}{
           \foreach \j in {1, 2, 3}{
              \solidnode{h\i\j};
            }
    }
              \solidnode{h43};
\end{scope}
\begin{scope}[shift={(35, 0)}]
    % Draw two triangles
    \drawTriangle{0}{1}
    \drawTriangle{32}{2}
    
    % Draw edges between x and y
    \newcommand{\drawXY}[1]{
        \draw (x1#1) -- (y1#1)
                     (x2#1) -- (y2#1)
                     (x3#1) -- (y3#1);

        % Add labels
        \node at (x1#1) [left] {$x_{1}^{#1}$};
        \node at (y1#1) [right] {$y_{1}^{#1}$};
        \node at (x2#1) [right] {$x_{2}^{#1}$};
        \node at (y2#1) [below] {$y_{2}^{#1}$};
        \node at (x3#1) [below] {$x_{3}^{#1}$};
        \node at (y3#1) [left] {$y_{3}^{#1}$};
    }
    \drawXY{1}
    \drawXY{2}
    
    % Add labels
    \node at (h21) [below] {$h_{2}^{1}$};
    \node at (h31) [left] {$h_{3}^{1}$};
    \node at (h22) [right] {$h_{2}^{2}$};
    \node at (h32) [below] {$h_{3}^{2}$};
    
    % Connect the triangles
    \draw (h11) -- (h12);
    \draw (h21) -- (h32);

    % Draw the diamond
    \drawDiamond{3}
    \draw (x13) -- (y13)
                 (x23) -- (y23)
                 (x33) -- (y33)
                 (x43) -- (y43);
    \node at (x13) [above] {$x_{1}^{3}$};
    \node at (y13) [above] {$y_{1}^{3}$};
    \node at (x23) [above] {$x_{2}^{3}$};
    \node at (y23) [below] {$y_{2}^{3}$};
    \node at (x33) [below] {$x_{3}^{3}$};
    \node at (y33) [below] {$y_{3}^{3}$};
    \node at (x43) [below] {$x_{4}^{3}$};
    \node at (y43) [above] {$y_{4}^{3}$};
    
    % Additional curved connections
    \draw [bend right=10] (h31) to (h43);
    \draw [bend left=10] (h22) to (h23);
    \node at ($(h33) + (0, -10)$) {\textbf{$G$}};
    
        \foreach \i in {1, 2, 3}{
           \foreach \j in {1, 2, 3}{
              \solidnode{h\i\j};
              \solidnode{x\i\j};
              \solidnode{y\i\j};
            }
    }
              \solidnode{h43};
              \solidnode{x43};
              \solidnode{y43};
\end{scope}
    \end{tikzpicture}
    \caption{Creating $G$ by substituting each vertex of $H$ with a triangle.}
    \label{M1}
\end{figure}

Since $H - E(\mathcal{F}')$ is a perfect matching in $H$, denote this matching by $M'$. (For example, in \cref{M1}, $M' = \{ h_{1}^{1}h_{1}^{2}, h_{2}^{1}h_{3}^{2}, h_{3}^{1}h_{4}^{3}, h_{1}^{3}h_{3}^{3}, h_{2}^{3}h_{2}^{2} \}$.) Define $M$ as the set of edges obtained by adding all edges $x_{j}^{i}y_{j}^{i}$ (from the substituted triangles) to $M'$. Consequently, $G$ decomposes into $M$ and $\mathcal{F}$, \ie $G = M + \mathcal{F}$.

We consider two cases based on the parity of $m_{i}$:
\begin{itemize}
    \item[(1)] When $m_{i}$ is even, color the edges of $C_{i}$ alternately with $1_{a}$ and $1_{b}$ in sequence.
    \item[(2)] When $m_{i}$ is odd, select an edge $y_{j}^{i}x_{j+1}^{i}$ in $C_{i}$ and color it with $3_{a}$. Then, color the remaining edges along the path $x_{j+1}^{i}h_{j+1}^{i} \dots x_{j}^{i}h_{j}^{i}y_{j}^{i}$ alternately with $1_{a}$ and $1_{b}$.
\end{itemize}

Finally, color all edges in $M$ with $1_{c}$. It is straightforward to verify that the resulting coloring is indeed a $(1, 1, 1, 3)$-packing edge-coloring of $G$. (See \cref{M2} for an illustration of a $(1, 1, 1, 3)$-packing edge-coloring of $G$ after substituting each vertex of $H$ with a triangle.)

\begin{figure}
    \centering
    \begin{tikzpicture}[line width=1pt, scale=0.12, every node/.style={font=\small}]

    % Macro to draw a triangle
    \newcommand{\drawTriangle}[2]{
        \coordinate (h3#2) at (#1, 0);
        \coordinate (h2#2) at (#1 + 20, 0);
        \coordinate (h1#2) at ($(h3#2)!1!60:(h2#2)$);
        \coordinate (x1#2) at ($(h1#2)!0.25!(h3#2)$);
        \coordinate (y1#2) at ($(h1#2)!0.25!(h2#2)$);
        \coordinate (x2#2) at ($(h1#2)!0.75!(h2#2)$);
        \coordinate (y2#2) at ($(h2#2)!0.25!(h3#2)$);
        \coordinate (x3#2) at ($(h2#2)!0.75!(h3#2)$);
        \coordinate (y3#2) at ($(h1#2)!0.75!(h3#2)$);
        
        % Draw the triangle
    \draw [brown] (y1#2) -- (x2#2);
        \draw [red] (x2#2) -- (h2#2)
                              (y2#2) -- (x3#2)
                              (h3#2) -- (y3#2)
                              (x1#2) -- (h1#2);
    \draw [green] (h1#2) -- (y1#2)
                              (h2#2) -- (y2#2)
                              (x3#2) -- (h3#2)
                              (y3#2) -- (x1#2);
       
        % Draw edges between x and y
        \draw [blue] (x1#2) -- (y1#2)
                                (x2#2) -- (y2#2)
                                (x3#2) -- (y3#2);

        % Add labels
        \node at (h1#2) [above] {$h_{1}^{#2}$};
        \node at (x1#2) [left] {$x_{1}^{#2}$};
        \node at (y1#2) [right] {$y_{1}^{#2}$};
        \node at (x2#2) [right] {$x_{2}^{#2}$};
        \node at (y2#2) [below] {$y_{2}^{#2}$};
        \node at (x3#2) [below] {$x_{3}^{#2}$};
        \node at (y3#2) [left] {$y_{3}^{#2}$};
    }

    % Macro to draw a diamond
    \newcommand{\drawDiamond}[1]{
        \coordinate (h1#1) at ($(h21)!0.5!(h32) + (0, -10)$);
        \coordinate (h3#1) at ($(h1#1) + (0,-20)$);
        \coordinate (h2#1) at ($(h1#1)!1!60:(h3#1)$);
        \coordinate (h4#1) at ($(h1#1)!1!-60:(h3#1)$);
        
        \coordinate (x1#1) at ($(h1#1)!0.25!(h4#1)$);
        \coordinate (y1#1) at ($(h1#1)!0.25!(h2#1)$);
        \coordinate (x2#1) at ($(h1#1)!0.75!(h2#1)$);
        \coordinate (y2#1) at ($(h2#1)!0.25!(h3#1)$);
        \coordinate (x3#1) at ($(h2#1)!0.75!(h3#1)$);
        \coordinate (y3#1) at ($(h3#1)!0.25!(h4#1)$);
        \coordinate (x4#1) at ($(h3#1)!0.75!(h4#1)$);
        \coordinate (y4#1) at ($(h1#1)!0.75!(h4#1)$);

        % Draw the diamond
        \draw [blue] (h1#1) -- (h3#1);
        \draw (h1#1) -- (h2#1) -- (h3#1) -- (h4#1) -- cycle;
        \draw [red]   (x1#1) -- (h1#1)
                            (y1#1) -- (x2#1)
                            (h2#1) -- (y2#1)
                            (x3#1) -- (h3#1)
                            (y3#1) -- (x4#1)
                            (h4#1) -- (y4#1);
                            
      \draw [green] (h1#1) -- (y1#1)
                            (x2#1) -- (h2#1)
                            (y2#1) -- (x3#1)
                            (h3#1) -- (y3#1)
                            (x4#1) -- (h4#1)
                            (y4#1) -- (x1#1);
                            
        \draw [blue] (x1#1) -- (y1#1)
                                  (x2#1) -- (y2#1)
                                  (x3#1) -- (y3#1)
                                  (x4#1) -- (y4#1);

        % Add labels
        \node at (h1#1) [above] {$h_{1}^{#1}$};
        \node at (h2#1) [below] {$h_{2}^{#1}$};
        \node at (h3#1) [below] {$h_{3}^{#1}$};
        \node at (h4#1) [below] {$h_{4}^{#1}$};
    }

    % Draw two triangles
    \drawTriangle{0}{1}
    \drawTriangle{32}{2}
  
    % Add labels
    \node at (h21) [below] {$h_{2}^{1}$};
    \node at (h31) [left] {$h_{3}^{1}$};
    \node at (h22) [right] {$h_{2}^{2}$};
    \node at (h32) [below] {$h_{3}^{2}$};
    
    % Connect the triangles
    \draw [blue] (h11) -- (h12);
    \draw [blue] (h21) -- (h32);

    % Draw the diamond
    \drawDiamond{3}

    \node at (x13) [above] {$x_{1}^{3}$};
    \node at (y13) [above] {$y_{1}^{3}$};
    \node at (x23) [above] {$x_{2}^{3}$};
    \node at (y23) [below] {$y_{2}^{3}$};
    \node at (x33) [below] {$x_{3}^{3}$};
    \node at (y33) [below] {$y_{3}^{3}$};
    \node at (x43) [below] {$x_{4}^{3}$};
    \node at (y43) [above] {$y_{4}^{3}$};
    
    % Additional curved connections
    \draw [blue, bend right=10] (h31) to (h43);
    \draw [blue, bend left=10] (h22) to (h23);

    \foreach \i in {1, 2, 3}{
           \foreach \j in {1, 2, 3}{
              \solidnode{h\i\j};
              \solidnode{x\i\j};
              \solidnode{y\i\j};
            }
    }
              \solidnode{h43};
              \solidnode{x43};
              \solidnode{y43};

    \draw [red] ($(h23) + (8, -3)$) -- ($(h23) + (18, -3)$) node[right] {$1_{a}$}; 
    \draw [green] ($(h23) + (8, -6)$) -- ($(h23) + (18, -6)$) node[right] {$1_{b}$};
    \draw [blue] ($(h23) + (8, -9)$) -- ($(h23) + (18, -9)$) node[right] {$1_{c}$};
    \draw [brown] ($(h23) + (8, -12)$) -- ($(h23) + (18, -12)$) node[right] {$3_{a}$};
    \end{tikzpicture}
    \caption{A $(1, 1, 1, 3)$-packing edge-coloring of $G$ by substituting each vertex of $H$ with a triangle.}
    \label{M2}
\end{figure}

\noindent\textbf{Case 2: Some edges are replaced by strings of diamonds.}

Suppose $G$ is constructed by replacing some edges of $H$ with strings of diamonds in addition to substituting each vertex of $H$ with a triangle. For each cycle $C_{i}'$ in $\mathcal{F}'$, construct $C_{i}$ in $G$ as in Case 1. Color $C_{i}$ and $M$ as previously described in Case 1. For edges replaced by strings of diamonds, use the following method to color each string of diamonds.
\begin{enumerate}[leftmargin = 1.5cm,label = \textbf{Type~\arabic*}]
    \item \textbf{An edge in $M'$ of $H$ is replaced by a string of diamonds.} In this case, let the two sets of non-adjacent external edges of the string of diamonds be colored with $1_{a}$ and $1_{b}$, while all other edges in the string are colored with $1_{c}$.
    \item \textbf{An edge in the $2$-factor of $H$ is replaced by a string of diamonds.} Suppose the edge $y_{j}^{i} x_{j+1}^{i}$ in $\mathcal{F}$ is replaced by a string of diamonds. Let $c$ be its assigned color in Case 1.
    \begin{enumerate}[label = \textbf{Type~2.\arabic*}]
        \item If $c = 1_{a}$, color the two sets of non-adjacent external edges in the diamond string with $1_{b}$ and $1_{c}$, while assigning $1_{a}$ to all other edges.
        \item If $c = 1_{b}$, color the two sets of non-adjacent external edges in the diamond string with $1_{a}$ and $1_{c}$, while assigning $1_{b}$ to all other edges.
        \item If $c = 3_{a}$, color the two sets of non-adjacent external edges in the diamond string with $1_{c}$ and $1_{b}$ (or $1_{a}$), while assigning $1_{a}$ (or $1_{b}$) to all other edges in the diamond string, except for the edge at one endpoint which is assigned $3_{a}$.
    \end{enumerate}
\end{enumerate}

One can easily verify that this is indeed a $(1, 1, 1, 3)$-packing edge-coloring of $G$. (See \cref{M3} which depicts a $(1, 1, 1, 3)$-packing edge-coloring of $G$ obtained by replacing each vertex of $H$ with a triangle and some edges of $H$ with strings of diamonds.)
\end{proof}

\begin{figure}
    \centering
    \begin{tikzpicture}[line width=1pt, scale=0.19, every node/.style={font=\small}]

    % Macro to draw a triangle
    \newcommand{\drawTriangle}[2]{
        \coordinate (h3#2) at (#1, 0);
        \coordinate (h2#2) at (#1 + 20, 0);
        \coordinate (h1#2) at ($(h3#2)!1!60:(h2#2)$);
        \coordinate (x1#2) at ($(h1#2)!0.25!(h3#2)$);
        \coordinate (y1#2) at ($(h1#2)!0.25!(h2#2)$);
        \coordinate (x2#2) at ($(h1#2)!0.75!(h2#2)$);
        \coordinate (y2#2) at ($(h2#2)!0.25!(h3#2)$);
        \coordinate (x3#2) at ($(h2#2)!0.75!(h3#2)$);
        \coordinate (y3#2) at ($(h1#2)!0.75!(h3#2)$);
        \coordinate (r2#2) at ($(y2#2)!0.25!(x3#2)$);
        \coordinate (r4#2) at ($(y2#2)!0.75!(x3#2)$);
        \coordinate (r1#2) at ($(r2#2)!0.7!-45:(r4#2)$);
        \coordinate (r3#2) at ($(r2#2)!0.7!45:(r4#2)$);
        \coordinate (w1#2) at ($(h2#2)!1!-60:(r1#2)$);
        \coordinate (w2#2) at ($(h2#2)!1!-60:(r2#2)$);
        \coordinate (w3#2) at ($(h2#2)!1!-60:(r3#2)$);
        \coordinate (w4#2) at ($(h2#2)!1!-60:(r4#2)$);
        \coordinate (g1#2) at ($(h3#2)!1!60:(r1#2)$);
        \coordinate (g2#2) at ($(h3#2)!1!60:(r2#2)$);
        \coordinate (g3#2) at ($(h3#2)!1!60:(r3#2)$);
        \coordinate (g4#2) at ($(h3#2)!1!60:(r4#2)$);
        % Draw the triangle
    \draw [brown] (w2#2) -- (x2#2);
        \draw [red] (x2#2) -- (h2#2)
                           %(y2#2) -- (x3#2)
                              (h3#2) -- (y3#2)
                              (x1#2) -- (h1#2)
                              (y2#2) -- (r2#2)
                              (r4#2) -- (x3#2)
                              (r1#2) -- (r3#2)
                              (g1#2) -- (g2#2)
                              (g3#2) -- (g4#2)
                              (y1#2) -- (w4#2)
                              (w1#2) -- (w3#2);
    \draw [green] (h1#2) -- (y1#2)
                              (h2#2) -- (y2#2)
                              (x3#2) -- (h3#2)
                          %(y3#2) -- (x1#2)
                              (r1#2) -- (r2#2)
                              (r3#2) -- (r4#2)
                              (g2#2) -- (x1#2)
                              (y3#2) -- (g4#2)
                              (g1#2) -- (g3#2)
                              (w1#2) -- (w2#2)
                              (w3#2) -- (w4#2);

        % Draw edges between x and y
        \draw [blue] (x1#2) -- (y1#2)
                                (x2#2) -- (y2#2)
                                (x3#2) -- (y3#2)
                                (r1#2) -- (r4#2)
                                (r2#2) -- (r3#2)
                                (w1#2) -- (w4#2)
                                (w2#2) -- (w3#2)
                                (g1#2) -- (g4#2)
                                (g2#2) -- (g3#2);

        % Add labels
        \node at (h1#2) [above] {$h_{1}^{#2}$};
        \node at (x1#2) [left] {$x_{1}^{#2}$};
        \node at (y1#2) [right] {$y_{1}^{#2}$};
        \node at (x2#2) [right] {$x_{2}^{#2}$};
        \node at (y2#2) [below] {$y_{2}^{#2}$};
        \node at (x3#2) [below] {$x_{3}^{#2}$};
        \node at (y3#2) [left] {$y_{3}^{#2}$};
        \node at ($(r1#2)!1.3!(r3#2)$) {Type 2.1};
        \node [rotate = 60] at ($(g3#2)!1.3!(g1#2)$) {\small Type 2.2};
        \node [rotate = -60]at ($(w3#2)!1.3!(w1#2)$) {Type 2.3};
    }

    % Macro to draw a diamond
    \newcommand{\drawDiamond}[1]{
        \coordinate (h1#1) at ($(h21)!0.5!(h32) + (0, -10)$);
        \coordinate (h3#1) at ($(h1#1) + (0,-20)$);
        \coordinate (h2#1) at ($(h1#1)!1!60:(h3#1)$);
        \coordinate (h4#1) at ($(h1#1)!1!-60:(h3#1)$);
        
        \coordinate (x1#1) at ($(h1#1)!0.25!(h4#1)$);
        \coordinate (y1#1) at ($(h1#1)!0.25!(h2#1)$);
        \coordinate (x2#1) at ($(h1#1)!0.75!(h2#1)$);
        \coordinate (y2#1) at ($(h2#1)!0.25!(h3#1)$);
        \coordinate (x3#1) at ($(h2#1)!0.75!(h3#1)$);
        \coordinate (y3#1) at ($(h3#1)!0.25!(h4#1)$);
        \coordinate (x4#1) at ($(h3#1)!0.75!(h4#1)$);
        \coordinate (y4#1) at ($(h1#1)!0.75!(h4#1)$);

        % Draw the diamond
        \draw [blue] (h1#1) -- (h3#1);
        \draw (h1#1) -- (h2#1) -- (h3#1) -- (h4#1) -- cycle;
        \draw [red]   (x1#1) -- (h1#1)
                            (y1#1) -- (x2#1)
                            (h2#1) -- (y2#1)
                            (x3#1) -- (h3#1)
                            (y3#1) -- (x4#1)
                            (h4#1) -- (y4#1);
                            
      \draw [green] (h1#1) -- (y1#1)
                            (x2#1) -- (h2#1)
                            (y2#1) -- (x3#1)
                            (h3#1) -- (y3#1)
                            (x4#1) -- (h4#1)
                            (y4#1) -- (x1#1);
                            
        \draw [blue] (x1#1) -- (y1#1)
                                  (x2#1) -- (y2#1)
                                  (x3#1) -- (y3#1)
                                  (x4#1) -- (y4#1);

        % Add labels
        \node at (h1#1) [above] {$h_{1}^{#1}$};
        \node at (h2#1) [below] {$h_{2}^{#1}$};
        \node at (h3#1) [below] {$h_{3}^{#1}$};
        \node at (h4#1) [below] {$h_{4}^{#1}$};
    }

    % Draw two triangles
    \drawTriangle{0}{1}
    \drawTriangle{32}{2}
  
    % Add labels
    \node at (h21) [below] {$h_{2}^{1}$};
    \node at (h31) [left] {$h_{3}^{1}$};
    \node at (h22) [right] {$h_{2}^{2}$};
    \node at (h32) [below] {$h_{3}^{2}$};
    
    % Connect the triangles
    \coordinate (b41) at ($(h11)!2/9!(h12)$);
    \coordinate (b21) at ($(h11)!4/9!(h12)$);
    \coordinate (b11) at ($(b41)!0.7!45:(b21)$);
    \coordinate (b31) at ($(b41)!0.7!-45:(b21)$);
    \coordinate (b42) at ($(h11)!5/9!(h12)$);
    \coordinate (b22) at ($(h11)!7/9!(h12)$);
    \coordinate (b12) at ($(b42)!0.7!45:(b22)$);
    \coordinate (b32) at ($(b42)!0.7!-45:(b22)$);
    \draw [red]   (b11) -- (b21)
                            (b31) -- (b41)
                            (b12) -- (b22)
                            (b32) -- (b42);
    \draw [green] (b11) -- (b41)
                              (b21) -- (b31)
                              (b12) -- (b42)
                              (b22) -- (b32);
    \draw [blue] (h21) -- (h32)
                            (h11) -- (b41)
                            (b11) -- (b31)
                            (b21) -- (b42)
                            (b12) -- (b32)
                            (b22) -- (h12);

    % Draw the diamond
    \drawDiamond{3}

    \node at (x13) [above] {$x_{1}^{3}$};
    \node at (y13) [above] {$y_{1}^{3}$};
    \node at (x23) [above] {$x_{2}^{3}$};
    \node at (y23) [below] {$y_{2}^{3}$};
    \node at (x33) [below] {$x_{3}^{3}$};
    \node at (y33) [below] {$y_{3}^{3}$};
    \node at (x43) [below] {$x_{4}^{3}$};
    \node at (y43) [above] {$y_{4}^{3}$};
    
    % Additional curved connections
    \draw [blue, bend right=10] (h31) to (h43);
    \draw [blue, bend left=10] (h22) to (h23);

    \foreach \i in {1, 2, 3}{
           \foreach \j in {1, 2, 3}{
              \solidnode{h\i\j};
              \solidnode{x\i\j};
              \solidnode{y\i\j};
            }
    }
    \foreach \i in {1, 2, 3, 4}{
           \foreach \j in {1, 2}{
              \solidnode{r\i\j};
              \solidnode{g\i\j};
              \solidnode{b\i\j};
              \solidnode{w\i\j};
            }
    }
              \solidnode{h43};
              \solidnode{x43};
              \solidnode{y43};

    \node at ($(b11)!0.5!(b12)$) {Type 1};
    \draw [red] ($(h23) + (8, -3)$) -- ($(h23) + (18, -3)$) node[right] {$1_{a}$}; 
    \draw [green] ($(h23) + (8, -6)$) -- ($(h23) + (18, -6)$) node[right] {$1_{b}$};
    \draw [blue] ($(h23) + (8, -9)$) -- ($(h23) + (18, -9)$) node[right] {$1_{c}$};
    \draw [brown] ($(h23) + (8, -12)$) -- ($(h23) + (18, -12)$) node[right] {$3_{a}$};
    \end{tikzpicture}
    \caption{A $(1, 1, 1, 3)$-packing edge-coloring of $G$ by replacing each vertex of $H$ with a triangle and some edges of $H$ with strings of diamonds.}
    \label{M3}
\end{figure}

To extend the $(1, 1, 1, 3)$-packing edge-coloring to claw-free cubic graphs containing bridges, we proceed as follows: Let $G$ be a connected claw-free cubic graph with bridge set $B(G) = \{ e_{1}, \dots, e_{b} \}$. Removing $B(G)$ decomposes $G$ into components $\{G_{i}\}_{0 \leq i \leq b}$, where each $G_{i}$ is a connected induced subgraph of $G$. Define a new graph $T_{G}$, referred to as the bridge tree of $G$, as follows: The vertex set of $T_{G}$ is $V(T_{G}) = \{ g_{0}, g_{1}, \dots, g_{b} \}$, where $g_{i}$ corresponds to component $G_{i}$ of $G - B(G)$. The edge set of $T_{G}$ is 
\begin{equation*}
E(T_{G}) = \{ g_{i}g_{j} \mid \text{there exists a bridge $e \in B(G)$ that connects $G_{i}$ and $G_{j}$}\}.
\end{equation*}
Since each edge in $B(G)$ is a bridge, $T_{G}$ is a tree. 

The following lemma is essential for the subsequent proof process.

\begin{lemma}[Bre\v{s}ar, Kuenzel and Rall \cite{MR4878746}]\label{b}
Let $G$ be a connected claw-free cubic graph with $|B(G)| \geq 1$, and let $G_{i}$ be the components of $G - B(G)$. Then $G_{i}$ is $2$-edge-connected and satisfies one of the following.
\begin{itemize}
    \item[(1)] $G_{i}$ is isomorphic to $K_{3}$ (\ie $\Delta(G_{i}) = 2$);
    \item[(2)] $G_{i}$ is isomorphic to a diamond $D$ (\ie $\Delta(G_{i}) = 3$ and $|V(G_{i})| = 4$);
    \item[(3)] $G_{i}$ is isomorphic to a $2$-edge-connected graph with $\Delta(G_{i}) = 3$ and $|V(G_{i})| \geq 5$. Moreover, $G_{i}$ is a leaf of $T_{G}$ if and only if it contains exactly one degree-2 vertex.
\end{itemize}
\end{lemma}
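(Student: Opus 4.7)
The plan is to extract one structural fact first---that no vertex of $G$ is incident to more than one bridge---and then read off (1)--(3) by elementary degree counting and a short case analysis inside each $G_{i}$.

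First I would prove the one-bridge-per-vertex statement. Fix a vertex $v$ with neighbors $u_{1}, u_{2}, u_{3}$ and suppose for contradiction that both $vu_{1}$ and $vu_{2}$ are bridges. If $\{u_{1}, u_{2}, u_{3}\}$ is independent in $G$, then $\{v, u_{1}, u_{2}, u_{3}\}$ induces a claw, contradicting claw-freeness. Otherwise some edge $u_{i}u_{j}$ is present, producing a triangle through $v$ that provides an alternate $v$-to-$u_{1}$ path in $G - vu_{1}$, contradicting $vu_{1} \in B(G)$. As a consequence, every vertex of $G_{i}$ has degree $2$ or $3$ in $G_{i}$, with degree $2$ occurring precisely at endpoints of bridges.

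Next I would show $G_{i}$ is $2$-edge-connected. If $f \in E(G_{i})$ were a bridge of $G_{i}$ splitting $G_{i}$ into parts $A$ and $B$, I would modify $T_{G}$ by replacing $g_{i}$ with two vertices $g_{A}, g_{B}$, attaching each bridge formerly incident to $g_{i}$ to whichever of $g_{A}, g_{B}$ contains its $G_{i}$-endpoint. The resulting graph has $|V(T_{G})| + 1$ vertices and only $|V(T_{G})| - 1$ edges, hence is a forest with at least two components. This shows $f$ would be a bridge of $G$, contradicting $f \notin B(G)$.

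The classification of $G_{i}$ then reduces to a finite case check on $|V(G_{i})|$. Sizes $1$ and $2$ are impossible, since a cubic vertex with at most one incident bridge needs at least two internal edges in $G_{i}$, which cannot exist in a simple graph on one or two vertices. Size $3$ forces the degree sequence $(2,2,2)$, giving $G_{i} \cong K_{3}$. Size $4$ admits only the degree sequences $(3,3,3,3)$, $(3,3,2,2)$, or $(2,2,2,2)$; the first yields $K_{4}$ with no outgoing bridge (forcing $G = K_{4}$ and contradicting $|B(G)| \geq 1$), the second uniquely realizes the diamond, and the third yields $C_{4}$, excluded by the claw argument applied at any degree-$2$ vertex. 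Size $\geq 5$ cannot give a cycle (the same claw argument at a degree-$2$ vertex of a long cycle rules this out), so $\Delta(G_{i}) = 3$. Finally, since the number of degree-$2$ vertices of $G_{i}$ equals the number of bridges meeting $G_{i}$, which equals $\deg_{T_{G}}(g_{i})$, one concludes that $G_{i}$ is a leaf of $T_{G}$ iff it has exactly one degree-$2$ vertex. I expect the main obstacle to be the clean handling of the first step, where the case split on which pair of $u_{i}$'s is adjacent needs to be organized uniformly; once that is done, everything downstream is routine bookkeeping.
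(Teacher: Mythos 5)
The paper does not prove this lemma at all: it is imported verbatim from Bre\v{s}ar, Kuenzel and Rall \cite{MR4878746}, so there is no internal proof to compare against. Your argument is a correct, self-contained and elementary proof of the cited statement. The three pillars are all sound: (a) claw-freeness forces each vertex to meet at most one bridge (if $u_iu_j$ is an edge then the triangle $vu_iu_j$ certifies that $vu_i$ and $vu_j$ are non-bridges, and since at least one of $i,j$ lies in $\{1,2\}$ one of the two assumed bridges is killed --- your text fixes on ``$v$-to-$u_1$'', so add the word ``WLOG'' or run through the three subcases); (b) the vertex-splitting count $|V(T_G)|+1$ vertices versus $|V(T_G)|-1$ edges correctly shows a bridge of $G_i$ would be a bridge of $G$; and (c) the degree-sequence census on $|V(G_i)|\le 4$ plus the exclusion of cycles yields exactly the trichotomy, with the leaf criterion following from the bijection between degree-$2$ vertices of $G_i$, bridges meeting $G_i$, and edges of $T_G$ at $g_i$. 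The only step worth making explicit is the one you compress into ``the claw argument at a degree-$2$ vertex'': you should record the intermediate claim that every degree-$2$ vertex of $G_i$ lies on a triangle of $G_i$, because an edge from a $G_i$-neighbor of $v$ to the cross-bridge neighbor $u_3$ would itself be a bridge and would put $vu_3$ on a triangle, contradicting $vu_3\in B(G)$; this is exactly the observation the present paper later uses in \cref{bb}, and it is what rules out $C_4$ and longer cycles. With that sentence added, your write-up is a complete proof of a result the paper only cites.
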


Fix a longest path $P$ in $T_{G}$ of length $d = \operatorname{diam}(T_{G})$, and root $T_{G}$ at a leaf $g_{0}$ of $P$. Partition $V(T_{G})$ into levels $A_{i} = \{ g_{j} \in V(T_{G}) \mid d_{T_{G}} (g_{0}, g_{j}) = i \}$. By construction:
\begin{itemize}[leftmargin=*]
    \item $A_{0} = \{ g_{0} \}$;
    \item Each $g_{j} \in A_{i}$ ($i > 0$) has a unique parent $g_{k} \in A_{i-1}$;
    \item Every vertex of $A_{d}$ is a leaf in $T_{G}$.
\end{itemize}

Consequently, if $g_{j} \in A_{i}$ ($i > 0$), there exists a unique degree-2 vertex $p \in V(G_{j})$ with exactly one neighbor $q \in V(G_{k})$, where $g_{k}$ is the parent of $g_{j}$ in $T_{G}$. In this context, $q$ is referred to as the \emph{up-neighbor} of $p$.

When $G_{i} \cong K_{3}$ or $G_{i} \cong D$, a desired coloring is straightforward. We therefore focus on constructing a $2$-edge-connected claw-free cubic graph $\widetilde{G_{i}}$ when $G_{i}$ satisfies conclusion (3) of \cref{b}. Assume $G_{i}$ is a $2$-edge-connected graph with maximum degree $3$, where $|V(G_{i})| \geq 5$. Let $V_{i} = \{ v_{1}^{i}, v_{2}^{i}, \dots, v_{r_{i}}^{i} \}$ be the set of all degree-2 vertices in $G_{i}$, where $v_{1}^{i}$ is uniquely required to have an up-neighbor. For each $v_{j}^{i} \in V_{i}$, where $1 \leq j \leq r_{i}$, let $u_{j}^{i}$ and $w_{j}^{i}$ denote the two neighbors of $v_{j}^{i}$ in $G_{i}$.

\begin{claim}\label{bb}
Every vertex in $V_{i}$ lies on a triangle of $G_{i}$, \ie $u_{j}^{i} w_{j}^{i} \in E(G_{i})$. Consequently, $V_{i}$ is an independent set.
\end{claim}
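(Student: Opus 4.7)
The plan is to exploit the claw-freeness of $G$ together with the bridge structure captured by $T_G$. The key observation is that any $v_j^i \in V_i$ has degree $2$ in $G_i$ but degree $3$ in $G$, so its third neighbor $y$ sits on a bridge of $G$ and lies in a component of $G - B(G)$ different from $G_i$. I would then look at the neighborhood $N_G(v_j^i) = \{u_j^i, w_j^i, y\}$ and apply the claw-freeness hypothesis there.

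For the triangle assertion, the main obstacle is verifying that the only way claw-freeness can be satisfied at $v_j^i$ is through the edge $u_j^i w_j^i$, as opposed to one of the edges $u_j^i y$ or $w_j^i y$. To rule out the latter two, I would use the fact that a bridge cannot lie on a cycle: if, say, $u_j^i y \in E(G)$, then $v_j^i u_j^i y v_j^i$ would be a triangle containing the edge $v_j^i y$, contradicting that $v_j^i y$ is a bridge. The same argument excludes $w_j^i y \in E(G)$. With these two non-edges in hand, $\{v_j^i\} \cup N_G(v_j^i)$ would induce a $K_{1,3}$ unless $u_j^i w_j^i \in E(G)$, and since both endpoints lie in $V(G_i)$ this edge belongs to $G_i$.

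For the independence of $V_i$, I would argue by contradiction. Suppose $v_j^i v_l^i \in E(G_i)$ with $v_j^i, v_l^i \in V_i$. The first part applied to $v_j^i$ gives, after possibly relabeling, $N_{G_i}(v_j^i) = \{v_l^i, w\}$ with $v_l^i w \in E(G_i)$, and since $v_l^i$ also has degree $2$ in $G_i$, we get $N_{G_i}(v_l^i) = \{v_j^i, w\}$. Thus $\{v_j^i, v_l^i, w\}$ spans a triangle whose only vertex capable of further adjacency inside $G_i$ is $w$. If the triangle is all of $G_i$, then $|V(G_i)| = 3$ contradicts $|V(G_i)| \geq 5$ from \cref{b}(3); otherwise the unique edge leaving the triangle at $w$ separates the triangle from the remainder of $G_i$ and is therefore a bridge of $G_i$, contradicting the $2$-edge-connectivity of $G_i$ from \cref{b}. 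Hence $V_i$ is independent.
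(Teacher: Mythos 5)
Your proposal is correct and follows essentially the same route as the paper: claw-freeness at $v_j^i$ combined with the fact that the bridge $v_j^i y$ cannot lie on a triangle forces $u_j^i w_j^i \in E(G_i)$, and the independence of $V_i$ is derived exactly as in the paper by showing two adjacent degree-2 vertices would create a triangle whose third vertex either makes $|V(G_i)| = 3$ or is incident to a bridge of $G_i$. Your write-up in fact spells out the first step (ruling out $u_j^i y$ and $w_j^i y$ as edges) more explicitly than the paper, which simply asserts it.
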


\begin{proof}
Since $G$ is claw-free and $v_{j}^{i}$ is incident with a bridge, it follows that $v_{j}^{i}$ must lie on a triangle of $G_{i}$.

Assume without loss of generality that $v_{1}^{i}$ and $v_{2}^{i}$ are adjacent. As every vertex in $V_{i}$ lies on a triangle of $G_{i}$, we have that $v_{1}^{i}$ and $v_{2}^{i}$ share a common neighbor $z_{1}^{i}$. If $d_{G_{i}}(z_{1}^{i}) = 2$, this contradicts the condition $|V(G_{i})| \geq 5$. If $d_{G_{i}}(z_{1}^{i}) = 3$, $G_{i}$ contains a bridge, which is also a contradiction.
\end{proof}

Let $s_{j}^{i}$ be the neighbor of $u_{j}^{i}$ distinct from $v_{j}^{i}$ and $w_{j}^{i}$, and let $b_{j}^{i}$ be the neighbor of $w_{j}^{i}$ distinct from $u_{j}^{i}$ and $v_{j}^{i}$.

\begin{claim}\label{tt}
$s_{j}^{i} \neq b_{j}^{i}$, and $d_{G_{i}}(s_{j}^{i}) = d_{G_{i}}(b_{j}^{i}) = 3$.
\end{claim}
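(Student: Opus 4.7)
My plan is to prove both conclusions by contradiction, using the combination of 2-edge-connectivity of $G_i$, the bound $|V(G_i)|\ge 5$ from conclusion (3) of \cref{b}, and the structural information about $V_i$ supplied by \cref{bb}. Before either argument, I would first verify that $s_j^i$ and $b_j^i$ are even well-defined: since $V_i$ is independent by \cref{bb} and both $u_j^i$ and $w_j^i$ are adjacent to $v_j^i\in V_i$, neither of them lies in $V_i$; combined with $\delta(G_i)\ge 2$, this forces $d_{G_i}(u_j^i)=d_{G_i}(w_j^i)=3$, so the third neighbors $s_j^i$ and $b_j^i$ exist and are unique.

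For the inequality $s_j^i\ne b_j^i$, I would suppose for contradiction that $s_j^i=b_j^i=:z$ and examine $S=\{v_j^i,u_j^i,w_j^i\}$. Every $G_i$-neighbor of a vertex in $S$ then lies in $S\cup\{z\}$, so any edge leaving $S\cup\{z\}$ in $G_i$ must be incident with $z$. If $d_{G_i}(z)=2$, its two $G_i$-edges go to $u_j^i$ and $w_j^i$, so $S\cup\{z\}$ is a $4$-vertex connected component of $G_i$, contradicting $|V(G_i)|\ge 5$. If $d_{G_i}(z)=3$, the lone remaining edge at $z$ is a bridge of $G_i$, contradicting 2-edge-connectivity.

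Finally, to show $d_{G_i}(s_j^i)=3$ (the argument for $b_j^i$ being symmetric), I would assume $s_j^i\in V_i$ and apply \cref{bb}: the two $G_i$-neighbors of $s_j^i$, namely $u_j^i$ and some vertex $y$, must be adjacent. Then $y$ is a $G_i$-neighbor of $u_j^i$, so $y\in\{v_j^i,w_j^i\}$; but $v_j^i$'s only $G_i$-neighbors are $u_j^i$ and $w_j^i$, hence $y=w_j^i$. This makes $s_j^i$ the unique third neighbor of $w_j^i$, \ie $b_j^i=s_j^i$, contradicting the previous step. The main subtlety is the $s_j^i\ne b_j^i$ step, where one must resist invoking claw-freeness of $G$ directly (since $G_i$ need not even be cubic) and instead read off a small edge-cut from the local configuration around the triangle $v_j^iu_j^iw_j^i$.
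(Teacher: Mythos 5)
Your proposal is correct and follows essentially the same route as the paper: assuming $s_j^i=b_j^i$ forces $G_i$ to be a diamond or to contain a bridge, and a degree-$2$ third neighbor would violate the triangle condition of \cref{bb} (the paper phrases this via $u_j^ib_j^i\notin E(G_i)$, while you equivalently derive $s_j^i=b_j^i$; the two are symmetric). Your added verification that $s_j^i$ and $b_j^i$ are well-defined is a harmless extra detail the paper leaves implicit.
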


\begin{proof}
Assume $s_{j}^{i} = b_{j}^{i}$. In this case, $G_{i}$ must either contain a bridge or be a diamond, leading to a contradiction. Since $d_{G_{i}}(u_{j}^{i}) = 3$ and $u_{j}^{i}b_{j}^{i} \notin E(G)$, if $d_{G_{i}}(b_{j}^{i}) = 2$, \cref{bb} implies that $b_{j}^{i}$ must lie on a triangle of $G_{i}$, which is a contradiction. By symmetry, $s_{j}^{i}$ must also have degree $3$ in $G_{i}$.
\end{proof}

Now, we construct a $2$-edge-connected claw-free cubic graph $\widetilde{G_{i}}$ from $G_{i}$. Consider two cases based on the parity of $r_{i}$.

\begin{case}[\textbf{\bm{$r_{i}$} even}]\label{c}
Construct $\widetilde{G_{i}}$ from $G_{i}$ by adding the edges $v_{j}^{i}v_{j+1}^{i}$ for all odd $j$ with $1 \leq j \leq r_{i} - 1$.
\end{case}

\begin{case}[\textbf{\bm{$r_{i}$} odd}]\label{cc}
By \cref{bb,tt}, we have $u_{1}^{i}w_{1}^{i} \in E(G_{i})$, $s_{1}^{i} \neq b_{1}^{i}$, and $d_{G_{i}}(s_{1}^{i}) = d_{G_{i}}(b_{1}^{i}) = 3$.
Suppose $s_{1}^{i}b_{1}^{i} \in E(G_{i})$. Since $G$ is claw-free, it follows that $s_{1}^{i}$ and $b_{1}^{i}$ share a common neighbor $z_{1}^{i}$. If $d_{G_{i}}(z_{1}^{i}) = 2$, this contradicts $r_{i}$ being odd (as it would cause $r_{i} = 2$). If $d_{G_{i}}(z_{1}^{i}) = 3$, then $G_{i}$ would contain a bridge, which is also a contradiction. Thus, $s_{1}^{i}b_{1}^{i} \notin E(G_{i})$. Let $\widetilde{G_{i}}$ be the graph obtained from $G_{i} - \{v_{1}^{i}, u_{1}^{i}, w_{1}^{i} \}$ by adding edge $s_{1}^{i}b_{1}^{i}$ and edges $v_{j}^{i}v_{j+1}^{i}$ for all even $j$ with $2 \leq j \leq r_{i} - 1$.
\end{case}

In \cite{MR4878746}, it was proved that $\widetilde{G_{i}}$ is a $2$-edge-connected claw-free cubic graph, and if $r_{i}$ is odd, then $s_{1}^{i}b_{1}^{i}$ does not lie on a triangle of $\widetilde{G_{i}}$ unless $\widetilde{G_{i}} \cong K_{4}$.

\begin{theorem}
Every connected claw-free cubic graph is $(1, 1, 1, 3)$-packing edge-colorable.
\end{theorem}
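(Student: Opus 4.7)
The plan is to induct on $|V(T_G)|$, the number of $2$-edge-connected components of $G$. The base case $|V(T_G)| = 1$ says $G$ has no bridge and is therefore $2$-edge-connected, so \cref{m} delivers the desired coloring. For the inductive step, I would root $T_G$ as in the excerpt and select a leaf $g_j$ of $T_G$ at the deepest level $A_d$. A quick bridge count rules out that $G_j$ could be of type~(1) or (2) in \cref{b}: a $K_3$ component contributes three bridges to $T_G$ and a diamond contributes two, so $G_j$ must be of type~(3) with exactly one degree-$2$ vertex $p = v_1^j$, incident to the unique bridge $e = pq$ attaching $G_j$ to its parent component $G_k$. Claw-freeness of $G$ at $q$ (whose $G$-neighbors are $p, x, y$ with $p$ non-adjacent to both of $x, y$) then forces $xy \in E(G_k)$, so $q$ sits on a triangle $qxy$ inside $G_k$.

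Armed with this, I would form $\widetilde{G_j}$ through \cref{cc}, invoke \cref{m} to produce a $(1,1,1,3)$-packing edge-coloring $\phi$ of $\widetilde{G_j}$, and set $c = \phi(s_1^j b_1^j)$. Leveraging the property cited just above the target theorem --- namely that $s_1^j b_1^j$ avoids triangles of $\widetilde{G_j}$ unless $\widetilde{G_j} \cong K_4$ --- I would extend $\phi$ locally to a coloring of $G_j$ together with the bridge $e$ by keeping $\phi$ on the inherited edges and filling in the six edges that replace $s_1^j b_1^j$ in $G$ (the triangle $p u_1^j w_1^j$, the reinstated links $s_1^j u_1^j$ and $w_1^j b_1^j$, and the bridge $e$). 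A case analysis on $c \in \{1_a, 1_b, 1_c, 3_a\}$ furnishes explicit colors for these six edges, arranging in every case that $e$ receives color $1_c$; the non-triangle property guarantees the inherited colors at $s_1^j$ and $b_1^j$ leave enough slack for this to succeed. The exceptional case $\widetilde{G_j} \cong K_4$ is dispatched by direct construction.

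For the remainder of $G$, I would excise the six-vertex packet $V(G_j) \cup \{q, x, y\}$ and define $G^\star = (G - V(G_j) - \{q, x, y\}) + \{x'y'\}$, where $x', y'$ are the surviving third neighbors of $x, y$. Routine checks then confirm that $G^\star$ is connected, cubic, claw-free, and has $|V(T_{G^\star})| < |V(T_G)|$, so the inductive hypothesis yields a coloring $\psi$ of $G^\star$. Finally, $\psi$ is stitched to the local coloring produced in the previous step by filling in the three removed triangle edges $qx, qy, xy$ (using the freedom provided by $\psi$'s colour on $x'y'$) and matching the bridge colour $1_c$ already committed at the $G_j$ side. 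The main obstacle lies in the case analysis on $c$ that forces $e = 1_c$ while respecting the distance-$3$ constraint on $3_a$, and in handling the degenerate subcases --- $\widetilde{G_j} \cong K_4$, $x' = y'$, or $G_k$ small (e.g.\ $G_k \cong K_3$ or $G_k \cong D$) --- which do not fit the reduction and must be settled by short direct constructions.
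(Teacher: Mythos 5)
Your strategy is genuinely different from the paper's: you induct on the number of bridge components, peeling off a deepest leaf $G_j$ together with the triangle $qxy$ to which it attaches, whereas the paper colors every component independently, proves that in each such coloring all edges incident with degree-$2$ vertices avoid $3_a$, and then glues the components along a BFS of the bridge tree by merely permuting $\{1_a,1_b,1_c\}$ inside each component. The difference matters, because the step you defer to ``a case analysis on $c$'' is exactly where the difficulty lies, and as described it does not go through.

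Concretely: when you reinstate the triangle $p\,u_1^j w_1^j$, the outer edges $s_1^j u_1^j$ and $b_1^j w_1^j$ are each essentially forced into the unique color of $\{1_a,1_b,1_c\}$ missing at $s_1^j$, respectively $b_1^j$ (you cannot in general assign them $3_a$, since $\phi$ may already have a $3_a$ edge incident with, or close to, $s_1^j$ or $b_1^j$). If these two missing colors coincide --- say both equal $1_a$, which happens for instance when $c=1_a$ and the retained edges at both $s_1^j$ and $b_1^j$ are colored $1_b$ and $1_c$ --- then all three triangle edges must avoid $1_a$ and, being pairwise adjacent, must receive three distinct colors from $\{1_b,1_c,3_a\}$; so a new $3_a$ edge is forced into the triangle. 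Nothing in your argument controls its distance to the $3_a$ edges already present in $\phi$: a $3_a$ edge of $\phi$ at distance $2$ from $s_1^j b_1^j$ in $\widetilde{G_j}$ ends up at distance at most $3$ from $u_1^j w_1^j$ in $G_j$, violating the $3$-packing condition. Worse, if both sides of the bridge are forced to introduce a $3_a$ edge in their respective triangles, the two new $3_a$ edges can be at distance $2$ or $3$ from each other (e.g.\ $u_1^j p$ and $qx$ are at distance $2$). The ``non-triangle property'' of $s_1^j b_1^j$ says nothing about colors and provides no slack here. This is precisely the obstruction the paper circumvents via \cref{mm} (arranging in advance that $s_1^0 b_1^0$ is colored $1_a$ and that no edge incident with $s_1^0$ or $b_1^0$ is colored $3_a$, then choosing which triangle side receives $3_a$ by its distance to the existing $3_a$ edges) and via the claim that every component's coloring keeps $3_a$ off all edges at degree-$2$ vertices, so that bridges never see $3_a$ at all. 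To make your induction work you would have to strengthen the inductive hypothesis to carry such a guarantee, at which point you are essentially rebuilding the paper's argument. (Smaller unaddressed points: $x'y'$ may already be an edge of $G_k$, so $G^\star$ can be a multigraph; and the degenerate cases $G_k\cong K_3$ or $G_k\cong D$ are exactly those in which $x$ or $y$ has degree $2$ in $G_k$, so they cannot simply be set aside as isolated exceptions to the surgery.)
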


\begin{proof}
Let $G$ be a connected claw-free cubic graph with $|B(G)| = b \geq 0$, where $B(G)$ denotes its bridge set. Let $G_{i}$ denote the components of $G - B(G)$.
When $b = 0$, \cref{m} provides a $(1, 1, 1, 3)$-packing edge-coloring of $G$. For $b > 0$, consider the tree $T_{G}$ defined by $V(T_{G}) = \{g_{0}, g_{1}, \dots, g_{b}\}$ and $E(T_{G}) = \{g_{i}g_{j} \mid \text{there exists an edge $e \in B(G)$ that connects $G_{i}$ and $G_{j}$}\}$. Choose a path $P$ in $T_{G}$ of length $d = \operatorname{diam}(T_{G})$ and let $g_{0}$ be a leaf of $P$.

First, we consider the component $G_{0}$. Clearly, $g_{0}$ is also a leaf of $T_{G}$. By \cref{b}, $G_{0}$ contains exactly one degree-2 vertex $v_{1}^{0}$, and in this case, $r_{0} = 1$. By \cref{bb,tt}, $u_{1}^{0}w_{1}^{0} \in E(G_{0})$, $s_{1}^{0} \neq b_{1}^{0}$ and $s_{1}^{0}b_{1}^{0} \notin E(G_{0})$.
We construct $\widetilde{G_{0}}$ as follows: apply the method as Case~\ref{cc} to obtain the graph $\widetilde{G_{0}}$ from $G_{0} - \{v_{1}^{0}, u_{1}^{0}, w_{1}^{0}\}$ by adding the edge $s_{1}^{0}b_{1}^{0}$. The resulting graph $\widetilde{G_{0}}$ is a $2$-edge-connected claw-free cubic graph.

\begin{claim}\label{mm}
$\widetilde{G_{0}}$ admits a $(1, 1, 1, 3)$-packing edge-coloring $\varphi'$ such that $\varphi'(s_{1}^{0}b_{1}^{0}) = 1_{a}$, and no edge incident with $s_{1}^{0}$ or $b_{1}^{0}$ is colored with $3_{a}$.
\end{claim}

\begin{proof}
Note that the edge $s_{1}^{0}b_{1}^{0}$ is not on a triangle of $\widetilde{G_{0}}$ unless $\widetilde{G_{0}} \cong K_{4}$. If $\widetilde{G_{0}} \cong K_{4}$ or $\widetilde{G_{0}}$ is a ring of diamonds, it is straightforward to verify that there exists a proper edge-coloring using colors $1_{a}, 1_{b}, 1_{c}$ such that $s_{1}^{0}b_{1}^{0}$ is colored with $1_{a}$. So we may assume that $s_{1}^{0}b_{1}^{0}$ is not on a triangle of $\widetilde{G_{0}}$, and $\widetilde{G_{0}}$ is not a ring of diamonds. By \cref{thm:oum}, let $\widetilde{G_{0}}$ be constructed from a $2$-edge-connected cubic multigraph $H$ by substituting each vertex of $H$ with a triangle, and possibly replacing some edges of $H$ with strings of diamonds. Let $s_{1}^{0}b_{1}^{0}$ correspond to an edge $e$ in $H$. By \cref{thm:2factor}, $H$ has a $2$-factor $\mathcal{F}'$ containing $e$, where $e \in C_{i}'$. Then $s_{1}^{0}b_{1}^{0}$ corresponds to $y_{j}^{i}x_{j+1}^{i}$, where $y_{j}^{i}x_{j+1}^{i}$ may or may not be replaced by a string of diamonds, and $1 \leq j \leq m_{i}$.

We consider two cases:
\begin{itemize}
    \item[(1)] \textbf{$m_{i}$ is even.} By the coloring process described in the proof of \cref{m}, $\widetilde{G_{0}}$ admits a $(1, 1, 1, 3)$-packing edge-coloring $\varphi'$ with $\varphi'(s_{1}^{0}b_{1}^{0}) = 1_{a}$.
    \item[(2)] \textbf{$m_{i}$ is odd.} If necessary, we can relabel the vertices on the cycle $x_{1}^{i}h_{1}^{i}y_{1}^{i}x_{2}^{i}h_{2}^{i}y_{2}^{i}\dots x_{m_{i}}^{i}h_{m_{i}}^{i}y_{m_{i}}^{i}x_{1}^{i}$ such that, from the coloring process described in the proof of \cref{m}, $e$ is colored with $1_{a}$, and correspondingly $s_{1}^{0}b_{1}^{0}$ is also colored with $1_{a}$.
\end{itemize}

This completes the proof of \cref{mm}.
\end{proof}

Let $\varphi'$ be an edge-coloring from \cref{mm}. By symmetry, let $b_{1}^{0}w_{1}^{0}$ be the edge in $\{b_{1}^{0}w_{1}^{0}, s_{1}^{0}u_{1}^{0}\}$ farthest from the edges colored with $3_{a}$ in $\widetilde{G_{0}}$. Now, define a coloring $\varphi$ of $G_{0}$ as follows: let $\varphi(e) = \varphi'(e)$ for all $e \in E(G_{0} - \{v_{1}^{0}, u_{1}^{0}, w_{1}^{0}\})$, color the edges of the path $s_{1}^{0}u_{1}^{0}v_{1}^{0}w_{1}^{0}$ alternately with $1_{a}$ and $1_{b}$ in sequence, and assign $\varphi(b_{1}^{0}w_{1}^{0}) = 3_{a}$ and $\varphi(u_{1}^{0}w_{1}^{0}) = 1_{c}$.

Next, we consider each $G_{i}$, where $i \geq 1$. If $G_{i} \cong K_{3}$, then the edges of $G_{i}$ are assigned the colors $1_{a}, 1_{b}$ and $1_{c}$, respectively. If $G_{i} \cong D$, then the two sets of non-adjacent external edges and one internal edge are assigned the colors $1_{a}, 1_{b}$, and $1_{c}$, respectively.

\begin{claim}
Every $2$-edge-connected $G_{i}$ with $\Delta(G_{i}) = 3$ and $|V(G_{i})| \geq 5$ admits a $(1, 1, 1, 3)$-packing edge-coloring.
\end{claim}

\begin{proof}
We consider the cases based on the parity of $r_{i}$.
\begin{itemize}
    \item[(1)] \textbf{$r_{i}$ is even.} We construct $\widetilde{G_{i}}$ via Case~\ref{c}. By \cref{m}, $\widetilde{G_{i}}$ has a $(1, 1, 1, 3)$-packing edge-coloring $\varphi'$. Let $\varphi$ be a coloring of $G_{i}$, and $\varphi(e) = \varphi'(e)$ for all $e\in E(G_{i})$. Clearly, $\varphi$ is a $(1, 1, 1, 3)$-packing edge-coloring.
    \item[(2)] \textbf{$r_{i}$ is odd.} We construct $\widetilde{G_{i}}$ via Case~\ref{cc}. By \cref{m}, $\widetilde{G_{i}}$ has a $(1, 1, 1, 3)$-packing edge-coloring $\varphi'$. By symmetry, let $b_{1}^{i}w_{1}^{i}$ be the edge in $\{b_{1}^{i}w_{1}^{i}, s_{1}^{i}u_{1}^{i}\}$ farthest from the edges colored with $3_{a}$ in $\widetilde{G_{i}}$. Let $\varphi$ be a coloring of $G_{i}$, and $\varphi(e) = \varphi'(e)$ for all $e \in E(G_{i})$.
	Color the edges of the path $s_{1}^{i}u_{1}^{i}v_{1}^{i}w_{1}^{i}$ alternately with colors $1_{a}$ and $1_{b}$ in sequence, and assign $\varphi(b_{1}^{i}w_{1}^{i}) = 3_{a}$ and $\varphi(u_{1}^{i}w_{1}^{i}) = 1_{c}$. From the proof of \cref{mm}, this yields a valid $(1, 1, 1, 3)$-packing edge-coloring. \qedhere
\end{itemize}
\end{proof}

\begin{claim}
For the coloring described above on $G_{i}$, all edges incident with degree-2 vertices in $G_{i}$ use colors from $\{1_{a}, 1_{b}, 1_{c}\}$.
\end{claim}

\begin{proof}
For any degree-2 vertex $v_{j}^{i}$ in $G_{i}$, let $u_{j}^{i}$ and $w_{j}^{i}$ denote the two neighbors of $v_{j}^{i}$ in $G_{i}$. Since $G$ is claw-free, we have $u_{j}^{i}w_{j}^{i} \in E(G_{i})$. According to the coloring process for $G_{i}$, edges colored with $3_{a}$ cannot lie on a triangle. The coloring process explicitly assigns only $\{1_{a}, 1_{b}, 1_{c}\}$ to edges adjacent to $v_{j}^{i}$, satisfying the claim.
\end{proof}

The colors in $\{1_{a}, 1_{b}, 1_{c}\}$ can be freely permuted without affecting the validity of the packing condition on $G_{i}$. Specifically, if we perform a permutation on the edges that have already been assigned colors, for example, by changing all edges assigned $1_{a}$ to $1_{b}$ and all edges assigned $1_{b}$ to $1_{a}$, this operation does not cause any contradictions or conflicts. This is because the permutation only reassigns the colors of the edges without altering the structure of the edges or the coloring requirements of the graph.

We perform a coloring of $T_{G}$ using Breadth-First Search (BFS). First, assign $\{ 1_{a}, 1_{b}, 1_{c}, 3_{a} \}$ to the edges of $G_{0}$ (root component). For each edge $pq$ connecting $A_{0}$ and $A_{1}$, where $q$ is the up-neighbor of $p$, the edge $pq$'s color is determined by $G_{0}$'s coloring. Note that each component $G_{i} \neq G_{0}$ contains exactly one vertex having an up-neighbor. If necessary, we can permute the colors $1_{a}, 1_{b}, 1_{c}$ in $G_{i}$ to allow $pq$ to use the color missing at $q$. 
Next, by appropriately permuting $\{1_{a}, 1_{b}, 1_{c}\}$, assign colors to the edges of $G_{i}$ in $A_{1}$. The color of the edges between $A_{1}$ and $A_{2}$ is then determined by the coloring of $G_{i}$ corresponding to a vertex $g_{i} \in A_{1}$. Continuing this process iteratively, we can obtain a $(1, 1, 1, 3)$-packing edge-coloring of $G$.
\end{proof}

\end{document}